\newcommand{\pp}[1]{{\mathfrak #1}}
\newcommand{\A}{\operatorname{A}}
\newcommand{\divs}{\operatorname{div}}
\newcommand{\ext}{\operatorname{Ext}}
\newcommand{\tor}{\operatorname{Tor}}
\newcommand{\Hom}{\operatorname{Hom}}
\newcommand{\ch}{\operatorname{ch}}
\newcommand{\K}{\operatorname{K_0}} 
\newcommand{\td}{\operatorname{td}}
\newcommand{\depth}{\operatorname{depth}}
\newcommand{\Min}[1]{\operatorname{Min(#1)}}
\newcommand{\hk}[1]{\varphi_n(#1) } 
\newcommand{\hkr}[2]{\varphi_n^{#1}(#2)} 
\theoremstyle{plain}
\newtheorem{theorem}{Theorem}[section]
\newtheorem{lemma}[theorem]{Lemma}
\newtheorem{corollary}[theorem]{Corollary}
\theoremstyle{definition}
\newtheorem{example}[theorem]{Example}
\newtheorem{chunk}[theorem]{}
\newtheorem{remark}[theorem]{Remark}
\theoremstyle{remark}
\numberwithin{equation}{section}
\title[Hilbert-Kunz Functions over Rings Regular in Codimension one]
{Hilbert-Kunz Functions over Rings Regular in Codimension one}
\author{C-Y. Jean Chan and Kazuhiko Kurano}
\date{July~1, 2013 \\
  \indent 2010 {\em Mathematics Subject Classification.}  13A35, 13B22, 13D40, 14C15. \\ 
  \indent {\em Key words and phrases.} Hilbert-Kunz function, rational
  equivalence.  \\
  \indent The first author was partially supported by AWM-NSA Mentoring
  Travel Grant, and by FRCE Type B Grant \#48780 and Early Career
  Investigator's Grant \#C61368 of Central Michigan University. The
  second author was partially supported by KAKENHI (24540054).  }
\begin{document}

\maketitle

\begin{abstract}
 
The aim of this manuscript is to discuss the Hilbert-Kunz
functions over an excellent local ring regular in codimension one.
We study the shape of the Hilbert-Kunz functions of modules and discuss the properties
of the coefficient of the second highest term in the function. 

Our results extend Huneke, McDermott and Monsky's result about the shape of 
the Hilbert-Kunz functions in \cite{HMM} and a theorem of the second author in \cite{K5}
for rings with weaker conditions. In this paper, for a Cohen-Macaulay ring, 
we also explores an equivalence condition 
under which the second coefficient vanishes whenever the Hilbert-Kunz function of the ring
is considered with respect to an $\pp m$-primary ideal of finite projective dimension. 
We introduce an additive error of the Hilbert-Kunz functions of modules on a short exact 
sequence and give an estimate of such error.

\end{abstract}

\bigskip


\section{Introduction.}\label{open}

The aim of this paper is to study the stability of the Hilbert-Kunz
functions for finitely generated modules over local rings of prime
characteristic $p$. Besides its mysterious leading coefficient, known as 
the Hilbert-Kunz multiplicity, the
behavior of the Hilbert-Kunz function is rather unpredictable.
Some of the results presented in this paper are extensions of those 
in \cite{HMM} and \cite{K5} for rings with weaker
conditions. Following 
these improvements, we make further discussions on the properties of
the second terms in which the Hilbert-Kunz functions stabilize and 
estimate the {\em additive error} of the Hilbert-Kunz functions on the
short exact sequences (defined in Section~\ref{error}).

We begin with a brief introduction to the definition of the
Hilbert-Kunz functions and the motivation of our work presented in this paper. 

Throughout the paper, let $R$ be a Noetherian local ring of positive
characteristic $p$ and dimension $d$. We assume also that the residue
field of $R$ is perfect.  Let $I$ be an ideal primary to the maximal
ideal.  A {\em Frobenius $n$-th power} of $I$, denoted $I^{[p^n]}$, is
the ideal generated by all elements in the form of $x^{p^n}$ for any
$x$ in $I$. For simplicity on notation, we write $I_n$ for
$I^{[p^n]}$. The length of a finitely generated module, if exists, is
denoted by $\ell( \cdot )$.

Let $M$ be a finitely generated $R$-module.  In 1969
Kunz introduced a map from $\mathbb N$ to $\mathbb Z_{\geq 0}$:
for any positive integer $n$, define
\[ \hkr{R,I}{M} = \ell(M/I_n M). \] 
We note that the input variable $n$ is written 
as a subscript in the above expression for the convenience of discussion. 
This map was named the
{\em Hilbert-Kunz function of $M$ with respect to $I$} by
Monsky~\cite{Mon1}. Although the function depends on both $M$ and
$I$, when there is no ambiguity on the ideal $I$, we simply say the
Hilbert-Kunz function of $M$ and denote it by $\hk M$. 
Monsky considered also the limit
\begin{equation}\label{HKmult}
 \lim_{n \rightarrow \infty} \frac{ \hk{M}}{(p^{n})^t} 
\end{equation}
where $t=\dim M$ and proved the following results:

\begin{theorem}[Monsky~\cite{Mon1}]\label{Monsky} 
Let $(R,\pp m)$ be a Noetherian local ring of positive characteristic
$p$ and $\dim R=d$. Let $I$ be an $\pp m$-primary ideal and $M$ a
finitely generated module of dimension $t \leq d$. Then

$(a)$ The limit in (\ref{HKmult}) always exists as a positive real, 
denoted $e_{HK}(M)$. 
  
$(b)$ The Hilbert-Kunz function is always in the form of 
\begin{equation*}
\hk{M} = e_{HK}(M) q^t + O(q^{t-1}),
\end{equation*}
where $q = p^n$.

\end{theorem}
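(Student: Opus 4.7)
My plan is to prove $(a)$ and $(b)$ simultaneously by induction on $t = \dim M$, via standard reductions culminating in the case where $R$ is a complete local domain and $M=R$.

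\textbf{Reductions.} Completion at $\pp m$ commutes with formation of $M/I^{[p^n]}M$ and preserves lengths of finite-length modules, so I may assume $(R,\pp m)$ is complete. Using a prime filtration of $M$ with quotients $R/\pp_i$, together with a \emph{near-additivity} estimate described below, I reduce to the case $M = R/\pp$ for a single prime $\pp$ with $\dim R/\pp = t$; replacing $R$ by $R/\pp$, I may further assume $R$ itself is a complete local domain of dimension $t$ and $M = R$.

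\textbf{Main estimate.} Since $R$ is complete with perfect residue field $k$, it is $F$-finite. Write $F_*R$ for $R$ regarded as an $R$-module via the Frobenius $a \mapsto a^p$; then $F_*R$ is torsion-free of generic rank $p^t$, since $[K : K^p] = p^t$ for $K = \mathrm{Frac}(R)$ when $k$ is perfect and $\dim R = t$. Choose a short exact sequence
\[
0 \longrightarrow R^{\oplus p^t} \longrightarrow F_*R \longrightarrow C \longrightarrow 0
\]
with $\dim C \leq t-1$. Tensoring with $R/I^{[p^n]}$ and using the identification $\ell_R(F_*R/I^{[p^n]}F_*R) = \varphi_{n+1}(R)$ (which uses perfection of $k$ to equate the twisted and standard composition lengths) yields
\[
\varphi_{n+1}(R) \;=\; p^t\,\varphi_n(R) \;+\; \ell\bigl(C/I^{[p^n]}C\bigr) \;-\; \ell\bigl(\tor_1^R(R/I^{[p^n]},C)\bigr).
\]

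\textbf{Conclusion.} By the inductive hypothesis applied to $C$, the first error term is $O(q^{t-1})$; the near-additivity bound supplies the same control on the $\tor$ term. Thus $\varphi_{n+1}(R) = p^t\varphi_n(R) + O(q^{t-1})$, so
$\bigl|\varphi_{n+1}(R)/p^{(n+1)t} - \varphi_n(R)/p^{nt}\bigr| = O(p^{-n})$.
Therefore $\varphi_n(R)/q^t$ is Cauchy and converges to $e_{HK}(R)$; positivity follows from the standard Hilbert--Samuel lower bound $\varphi_n(R) \geq c\,q^t$ coming from a system of parameters inside $I$. Telescoping the recursion produces the $O(q^{t-1})$ error stated in $(b)$.

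\textbf{Main obstacle.} The linchpin is the near-additivity statement, namely $\ell(\tor_1^R(R/I^{[p^n]}, N)) = O(q^{t-1})$ whenever $\dim N \leq t-1$. This bound has to be interwoven with the induction rather than proved afterward, since its implicit constant depends on the very asymptotic being established. I would address this by strengthening the inductive hypothesis at each dimension to include both Monsky's limit assertion and a uniform $\tor$-length bound, propagating them together using short exact sequences $0 \to xN \to N \to N/xN \to 0$ with $x$ a carefully chosen parameter.
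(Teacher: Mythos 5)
The paper states this result as a quotation from Monsky's 1983 article and gives no proof of its own, so there is no in-paper argument to measure you against; I will assess your sketch on its merits. Your central device --- the recursion $\varphi_{n+1}(R)=p^{t}\varphi_n(R)+\mathcal O(q^{t-1})$ for a complete local domain, obtained from $0\to R^{\oplus p^t}\to F_*R\to C\to 0$ with $\dim C\le t-1$ --- is exactly the heart of Monsky's proof, and it is sound: both error terms concern the module $C$ of dimension $\le t-1$, the crude bound $\hk{C}=\mathcal O(q^{t-1})$ follows from $I^{[q]}\supseteq I^{\mu(q-1)+1}$ and classical Hilbert--Samuel theory, and the Cauchy/telescoping conclusion and the positivity via $I^{[q]}\subseteq I^{q}$ are correct. (Two small repairs: your displayed identity should be the two-sided estimate in which the subtracted term is the length of the \emph{image} of $\tor_1^R(C,R/I_n)$ in $(R/I_n)^{p^t}$, not the full Tor length; and the bound $\ell(\tor_1^R(N,R/I_n))=\mathcal O(q^{\dim N})$ does \emph{not} have to be interwoven with the induction --- it follows a priori from the crude bound applied over $R/\annih N$, so the circularity you worry about in your last paragraph is not actually present.)

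The genuine gap is the reduction from a general module to the domain case. The links of a prime filtration are $0\to M_{j-1}\to M_j\to R/\pp p_j\to 0$, and the additivity defect of $\hk{\cdot}$ across such a sequence is controlled by $\ell(\tor_1^R(R/\pp p_j,R/I_n))$, which your stated estimate bounds by $\mathcal O(q^{\dim R/\pp p_j})$. For the quotients with $\dim R/\pp p_j=t$ --- and these carry the entire main term --- this is $\mathcal O(q^{t})$, the same order as the quantity being computed. Consequently the filtration yields only the one-sided inequality $\hk{M}\le\sum_{\dim R/\pp p=t}\ell(M_{\pp p})\,\hk{R/\pp p}+\mathcal O(q^{t-1})$, and neither the existence of the limit in $(a)$ nor the $\mathcal O(q^{t-1})$ error in $(b)$ for general $M$ follows. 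The sharper fact that the additive error on any short exact sequence is $\mathcal O(q^{t-1})$ is essentially equivalent to the theorem itself (it is Monsky's Theorem~1.6, quoted in equation (4.1) of this paper), so it cannot be invoked as an ingredient of the reduction. The missing idea is Monsky's comparison lemma: if $M_{\pp p}\cong N_{\pp p}$ for every prime with $\dim R/\pp p=t$, one can manufacture a single homomorphism $h\colon M\to N$ that is an isomorphism at all such primes (clear denominators from local isomorphisms and use that $\pp p R_{\pp p}$ is nilpotent), so that $\ker h$ and $\coker h$ have dimension $\le t-1$ and your Tor estimate \emph{does} apply to them; a further d\'evissage is then needed because $M_{\pp p}$ is generally not semisimple, i.e.\ not isomorphic to $\kappa(\pp p)^{\ell(M_{\pp p})}$. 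Without this step, or an equivalent two-sided argument, your proof establishes the theorem only for $M=R/\pp p$ with $\pp p$ prime.
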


Monsky named the  limit in (\ref{HKmult}) the {\em Hilbert-Kunz multiplicity
  of $M$} ({\em with respect to $I$}).  We recall that a function
$f(n)$ is $O(q^s)$ for $q=p^n$ and some integer $s$ if there exists a constant $C$ such that
$|f(n)| \leq C \cdot (p^n)^s$ for all $n \gg 0$.

In this paper, we express the Hilbert-Kunz function of $M$ as
\begin{equation}\label{HKMon}
\hk M = \alpha_I(M) q^d + O(q^{d-1}),
\end{equation}
where $\alpha_I(M)$ equals $e_{HK}(M)$ if $\dim M =d$ and zero otherwise. 
Huneke, McDermott and Monsky studied further how 
$ \hk M$ depends on $n$ as $n$ grows. Their main theorem states: 

\begin{theorem}[Huneke-McDermott-Monsky~\cite{HMM}] \label{thmHMM}
Let $(R, \pp m)$ be an excellent local normal domain of characteristic $p$
with a perfect residue field and $\dim R =d$. Then 
$\hk M = \alpha_I(M) q^d + \beta_I(M) q^{d-1} + \mathcal O( q^{d-2} ) $
for some constants 	$\alpha_I(M)$ and $\beta_I(M)$ in $\mathbb R$. 
\end{theorem}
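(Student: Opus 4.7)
The plan is to reduce from an arbitrary finitely generated $M$ down to rank-one reflexive modules, and then extract the second-order term via a Frobenius-iteration argument that uses the divisor class group $\cl(R)$.

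\textbf{Reductions.} By Monsky's theorem above, modules of dimension strictly less than $d$ contribute $O(q^{d-1})$ to the Hilbert-Kunz function, and modules supported in codimension $\geq 2$ contribute $O(q^{d-2})$. First I would establish an additivity of $\varphi_n$ on short exact sequences modulo $O(q^{d-2})$ errors (whenever the outer terms are sufficiently thin), and then apply three successive simplifications: (i) replace $M$ by $M/\mathrm{tors}(M)$, absorbing the $O(q^{d-1})$ discrepancy into the $\beta$-coefficient that we are chasing; (ii) pass from torsion-free $M$ to its reflexive hull $M^{**}$, using that $R$ is $R_1$ so that the kernel and cokernel of $M\to M^{**}$ have codim $\geq 2$; and (iii) for reflexive $M$ of rank $r$, build an exact sequence
\begin{equation*}
0 \to R^{r-1} \to M \to L \to 0,
\end{equation*}
valid up to codimension-$2$ error, with $L=(\wedge^r M)^{**}$ the determinant (rank-one reflexive). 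This reduces the theorem to the case $M=L$, a divisorial ideal representing a class in $\cl(R)$.

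\textbf{Frobenius iteration.} For a rank-one reflexive $L$, set $L^{(p)}:=(L^{\otimes p})^{**}$, whose class equals $p\,[L]\in\cl(R)$. Combining the natural isomorphism $F(L)/I^{[p^{n+1}]}F(L) \cong F(L/I^{[p^n]}L)$ with the codim-$2$ inclusion $F(L)\hookrightarrow F(L)^{**}=L^{(p)}$, I would derive a recursion of the form
\begin{equation*}
\varphi_{n+1}(L) \;=\; p^d\,\varphi_n(L) \;+\; \gamma_n\,q^{d-1} \;+\; O(q^{d-2}),
\end{equation*}
where $\gamma_n$ depends only on the class $p^n[L]$ and on $I$. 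Iterating this recursion and summing the resulting geometric series (whose ratio $p^{d-1}/p^d=1/p<1$ ensures convergence) would produce an explicit formula for $\beta_I(L)$ and verify the $O(q^{d-2})$ remainder for the full Hilbert-Kunz function.

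\textbf{Main obstacle.} The delicate step is the $O(q^{d-2})$ bound inside the recursion: one must estimate the codim-$\geq 2$ cokernel of $F(L)\to L^{(p)}$ \emph{uniformly} in $n$, so that the implicit constant in $O(q^{d-2})$ does not grow as $n$ increases along the Frobenius orbit of $[L]$ in $\cl(R)$. This is where excellence of $R$ is used (to pass to the completion while preserving normality, so that $\cl(\widehat{R})$ has enough finiteness to control the orbit), and where the $R_1$ hypothesis is essential: all deviations of the Frobenius functor $F$ from exactness, and of $F$ from preserving reflexivity, are concentrated in codimension $\geq 2$ and so feed exactly into the second-order error and not into the leading one. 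The perfect-residue-field hypothesis allows the Frobenius length comparisons to be made without parasitic finite-extension contributions.
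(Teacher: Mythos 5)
Your overall architecture---reduce to torsion-free modules, then to the divisor class group, then run a Frobenius recursion and extract $\beta$ from a convergent series---is exactly the HMM strategy, which this paper reproduces (for $F$-finite domains with (R1$'$)) in Section~\ref{domain}. But the engine of your argument, the recursion, has two genuine defects. First, the factor $p^d$: to get it from $F(L)/I^{[p^{n+1}]}F(L)\cong F(L/I^{[p^n]}L)$ you need $\ell(F(N))=p^d\,\ell(N)$ for finite-length $N$, and this fails over a singular ring---already $\ell(F(R/\pp m))=\ell(R/\pp m^{[p]})$ is roughly $e_{HK}(R)\,p^d$, not $p^d$. The device that actually works is restriction of scalars along Frobenius rather than pullback: with a perfect residue field one has $\ell_R({}^1L/I_n\cdot{}^1L)=\varphi_{n+1}(L)$ on the nose, ${}^1L$ is torsion free of rank $p^d$, and $[{}^1L]_{d-1}=p^{d-1}[L]_{d-1}+[{}^1R]_{d-1}$ (this is Theorem~\ref{deltafcn} here). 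Second, even granting your recursion, it does not deliver $\beta$. Iterating $\varphi_{n+1}(L)=p^d\varphi_n(L)+\gamma_nq^{d-1}+O(q^{d-2})$ and summing does recover $\alpha$, but the second-order term sits in the tail $p^{nd-d}\sum_{k\ge n}\gamma_kp^{-k}$, which has the form $\beta q^{d-1}+O(q^{d-2})$ only if the sequence $(\gamma_n)$ itself converges at a geometric rate. Since you define $\gamma_n$ as a function of the class $p^n[L]$, which wanders through $\cl(R)$, no such convergence is available; this is not a technical uniformity issue but essentially the statement being proved, and your proposal assumes it rather than establishes it.

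The repair is to make the recursion homogeneous: for rank-one $L$ set $\delta_n(L)=\varphi_n(L)-\varphi_n(R)$ and compare the two torsion-free modules ${}^1L\oplus R^{p^{d-1}}$ and $L^{p^{d-1}}\oplus{}^1R$, which have the same rank and the same class in $\A_{d-1}(R)$ and hence the same Hilbert--Kunz function up to $O(q^{d-2})$ (Lemma~\ref{torsionfree}). This yields $\delta_{n+1}(L)=p^{d-1}\delta_n(L)+O(q^{d-2})$ with no inhomogeneous term, so $\delta_n(L)/q^{d-1}$ is Cauchy with geometrically decaying increments and converges to $\tau(L)$. Note also that the uniformity in $n$ of the $O(q^{d-2})$ error is not obtained from any finiteness of $\cl(\widehat R)$ (class groups of complete normal domains are typically not finitely generated); it holds because the step-$n$ error is $\varphi_n(A)-\varphi_n(B)$ for two \emph{fixed} modules $A,B$ with $[A]=[B]$, so the implied constant depends only on $L$. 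Your preliminary reductions (killing torsion, codimension-two corrections via (R1$'$), the Bourbaki-type sequence $0\to R^{r-1}\to M\to L\to 0$) are sound and correspond to Lemmas~\ref{codim2}, \ref{zeroclass} and \ref{torfreezeromod}.
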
 

If the module $M$ and the primary ideal $I$ is clear to the content
under consideration, we drop them from the expression and use $\alpha$
and $\beta$ in place of $\alpha_I(M)$ and $\beta_I(M)$ respectively. 

In this paper we say that a local ring satisfies (R1$'$) or is {\em
  regular in codimension one} if the localization of the ring is a
field (resp. a DVR) at a prime ideal of dimension $d$
(resp. $d-1$). Here the dimension of a prime ideal $\pp p$ means the
Krull dimension of $R/\pp p$. Since this condition is very similar 
to the usual (R1),  we denote it by (R1$'$).

We observe that a key lemma and certain crucial properties needed to
prove Theorem~\ref{thmHMM} either hold or have their natural
substitution without the normal condition on the ring, although 
 these extensions are not necessarily obvious (see Section~\ref{domain}
for details). Therefore it is natural to ask: is the normal condition
essential for $\hk M$ to stabilize at the second term $\beta_I(M)
q^{d-1}$? 

The goals of this paper are to prove that the shape of the
Hilbert-Kunz functions of modules in Theorem~\ref{thmHMM} holds for
excellent local rings satisfying (R1'), and to analyze
vanishing properties of the second term. The latter extends a theorem
of the second author in \cite{K5} about the vanishing of $\beta_I(R)$
related to the canonical module of $R$. We also discuss the vanishing
properties of $\beta_I(R)$ in terms of the Todd class of the ring in
the numerical Chow group. Furthermore, we prove that associated to
each module, there exists another quantity $\tau$ which is 
additive on short exact sequences.  The Hilbert-Kunz
multiplicity is additive on short exact sequences but the Hilbert-Kunz
function is not. Using the additivity of $\tau$, we define an additive
error of the Hilbert-Kunz functions, and provide this error an
estimation in terms of the torsion submodules and the $\tau$
value of appropriate modules.

It should be noted that generalization of the work in \cite{HMM} is
also studied independently by Hochster and Yao~\cite{HY} via a
different approach.

An example in Monsky~\cite{Mon1} shows that the (R1$'$) condition can not
be further relaxed for such stability of the Hilbert-Kunz function to
hold. Precisely, take $R = \mathbb Z/p[[x,y]]/(x^5 -y^5)$ then $R$ is
one-dimensional, has an isolated singularity and its Hilbert-Kunz
function is $\hk R = 5 p^n + \delta_n$ where $\delta_n = -4$ if $n$ is 
even and $-6$ if $n$ is odd.

As mentioned also in \cite{HMM} the normal ring $R = \mathbb Z/5
\mathbb Z [x_1, x_2, x_3, x_4]/(x_1^4+ x_2^4 + x_3^4 + x_4^4)$ of
dimension $d=3$ has the Hilbert-Kunz function $\hk R=\frac{168}{61}
(5^n)^3 - \frac{107}{61}(3^n)$ which was computed by Han and Monsky
\cite{HaM}. The tail $-\frac{107}{61}(3^n)$ is $O((5^n)^{d-2})$ but
not $\mathcal O((5^n)^{d-3})$. So the existence of the third
coefficient in the Hilbert-Kunz function in general is not possible.

We briefly describe the outline of the paper and the machinery used in
each section as follows. 

Section~\ref{chow} reviews the
definition of the rational equivalence and properties of cycle classes
that will be used in the later discussions. 

Section~\ref{general} contains the main results of this paper about
the existence of the second coefficient of the Hilbert-Kunz function
and its property. 
First we prove that the shape of the Hilbert-Kunz function as
described in Theorem~\ref{thmHMM} holds for local excellent rings that
satisfy (R1$'$) condition in Theorem~\ref{thm.HK}.  The method of
proving Theorem~\ref{thm.HK} is to reduce general cases to normal
domains and then apply Theorem~\ref{thmHMM}.  In the proof of
Theorem~\ref{thm.HK}, no rational equivalence is involved. We prove
also that the vanishing property of the second coefficient of the
Hilbert-Kunz function with respect to the maximal ideal is
characterized by the canonical module in Theorem~\ref{thm.can} . These
generalize the result of Theorem~\ref{thmHMM} and that in \cite{K5}
respectively. Secondly we investigate the properties of the second
coefficient with respect to arbitrary $\pp m$-primary ideals of finite
projective dimension. This is done in Theorem~\ref{CMcase} using the
techniques developed in \cite{K16, K5, Sr}.

Section~\ref{error}
presents some possible applications. Theorem~\ref{tau} first proves
that each torsion free module is associated with a real number $\tau$
and then formally describes $\tau$ as a group homomorphism compatible
with rational equivalence (extending
\cite[Corollary~1.10]{HMM}). Finally we deduce that $\tau$ is additive
on a short exact sequence and that the {\em additive error of the
  Hilbert-Kunz function} always arises from torsion submodules.

Finally in Section~\ref{domain} we revisit the proof in \cite{HMM} and
show that all delicate analysis in \cite{HMM} works for an arbitrary local
domain that is $F$-finite and satisfies (R1$'$). This section is listed
as an appendix since the result is a special case of Theroem~\ref{thm.HK}.
Nevertheless the section provides a straightforward proof
without the need of taking normalization. 
We believe that such a generalized proof of \cite{HMM} consists of
interesting arguments and it is worth sharing it with curious readers;
especially those who are interested in rational equivalence. This
argument, however, has its own limitation. It works for integral domains. 
The authors do not know how to extend the analysis beyond the case of 
integral domains.

In the proofs of the main theorems, it is essential that the integral
closure of the ring $R$ in its ring of fractions is finite over $R$.
For general results in Sections~\ref{general} and \ref{error} where $R$ is not
necessarily a domain, we assume $R$ to be excellent.  In
Section~\ref{domain} where the ring is an integral domain, we assume
$R$ is $F$-finite which implies $R$ is also excellent.

{\sl Acknowledgement.} The authors thank Roger Wiegand for pointing out an
error in the proof of Corollary~\ref{additive} in its early version, whose comment led 
to the corollary's current form.


\bigskip

\section{Preliminary on the Chow Group}\label{chow}

In this section, we recall the definition of Chow groups following
Roberts~\cite{R1}, and state some properties that will be handy in the
later sections.  Let $R$ be a Noetherian ring of dimension $d$.  We
define $Z_i(R)$ to be the free Abelian group generated by all prime
ideals of dimension $i$ in $R$. The {\em group of cycles} is the direct
sum $Z_*(R) = \oplus_{i =0}^d Z_i(R)$. For any prime ideal $\pp p$, we
write $[R/\pp p]$ for the element corresponding to $\pp p$ in $Z_*(R)$.
Let $\pp q$ be a prime ideal of dimension $i+1$
and $x$ an element in $R$ not contained in $\pp q$. 
The {\em rational equivalence} is an equivalence relation on $Z_*(R)$ by 
setting $\divs(\pp q, x) =0$ where
\[ \divs(\pp q, x) = \sum \ell ( (R/\pp q) _{\pp p} / x (R/\pp q) _{\pp p} ) 
[R/\pp p] \] with the summation over all prime ideals in $R$ of
dimension $i$, so $\divs(\pp q, x)$ is an element in $Z_i(R)$.  
Note that this is a finite sum since there are only
finitely many minimal prime ideals for $R/xR$. 
 Let ${\rm Rat}_i(R) $ be the subgroup of $Z_i(R)$
generated by $\divs(\pp q, x)$ for all $\pp q$ of
dimension $i+1$ and all $x \in R - \pp q$.  The {\em Chow group}
$A_*(R)$ of $R$ is the quotient of $Z_*(R)$ by ${\rm Rat}_*(R) =
\oplus _{i=0}^d {\rm Rat}_i(R)$.  The Chow group is also decomposed into the
direct sum of $\A_i(R)=Z_i(R)/ {\rm Rat}_i(R)$ for all $i= 0, \dots, d$. By
abuse of notation, we also use $[R/\pp p]$ to denote the image of 
$[R/\pp p]$ in $A_*(R)$.

For any finitely generated module $M$, there exists a prime filtration 
\[ 0 = M_0 \subset M_1 \subset \cdots \subset M_{n-1} \subset M_n=M \]
such that each quotient of consecutive submodules is a cyclic module
whose annihilator is exactly a prime ideal; {\em i.e.}, $M_{i+1}/M_i
\cong R/\pp p_i$ for some prime $\pp p_i$ and for all $i =0, \dots,
n$. We note that such a prime filtration of a module is not
unique. The prime ideals occurring in a filtration and the number of
times each prime ideal occurs vary except for the minimal prime
ideals.  Indeed if $\pp p$ is a minimal prime ideal for $M$, then the
number of times that it occurs in a filtration is exactly $\ell (M
_{\pp p} ) $.  It is proved in \cite{C1} that the sums of prime ideals
of dimension $d$ and $d-1$ from different filtrations are rationally
equivalent. Therefore they define a unique class in the Chow
group. Their equivalence class in $A_*(R)$ is called the {\em cycle
  class} of $M$ denoted $[M]$. By definition $[M] = [M]_d + [M]_{d-1}$
with $[M]_i \in A_i(R)$ for $i = d, d-1$.  Theorem~\ref{thmChan} lists
a property that will be utilized in this paper:

\begin{theorem}[Chan~\cite{C1}]\label{thmChan} 
Let $R$ be a Noetherian ring of dimension $d$ and let $M$ be a finitely generated 
module over $R$. 
Then the cycle class $[M] = [M]_d + [M]_{d-1}$ in $Z_d(R) \oplus
Z_{d-1}(R)$ defined by taking the sum of prime ideals of dimension $d$
and $d-1$ in a prime filtration has the following properties:

$(a)$ $[M]$ is independent of the choice of filtrations and hence
defines a unique class in $\A_*(R)$. 

$(b)$ If $0 \rightarrow M_1 \rightarrow M_2 \rightarrow M_3
\rightarrow 0$ is a short exact sequence of finitely generated
$R$-modules, then $[M_3]_i - [M_2]_i + [M_1]_i =0$ in $\A_i(R)$, for $i=
d$ and $d-1$.

\end{theorem}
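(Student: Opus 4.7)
The plan is to prove part (a) first, then derive (b) from it by splicing prime filtrations.

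For (a), the top-dimensional contribution is transparent: a prime of dimension $d$ is necessarily minimal, and localizing a prime filtration $0 = M_0 \subset \cdots \subset M_n = M$ at such a $\pp p$ annihilates every factor $R/\pp p_i$ with $\pp p_i \neq \pp p$. Thus
\[ [M]_d \;=\; \sum_{\substack{\pp p \in \Min{R} \\ \dim R/\pp p = d}} \ell(M_\pp p)\,[R/\pp p], \]
an expression free of the filtration. The substantive claim is that $[M]_{d-1}$ is independent in $\A_{d-1}(R)$. My strategy is to connect any two prime filtrations by a finite chain of elementary moves, each of which either permutes two adjacent factors or refines one factor into two via an intermediate submodule, and then to verify that each such move alters the dimension-$(d-1)$ component of the cycle by an element of $\rat_{d-1}(R)$. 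The decisive case is a swap of consecutive factors $R/\pp p_i, R/\pp p_{i+1}$ with $\pp p_i \subsetneq \pp p_{i+1}$: here $\pp p_i$ is forced to be minimal so $\dim R/\pp p_i = d$; the exchange affects $[M]_{d-1}$ only when $\dim R/\pp p_{i+1} = d-1$, in which case one exhibits an element $x \in \pp p_{i+1} \setminus \pp p_i$ so that the change equals $\divs(\pp p_i, x)$, manifestly zero in $\A_{d-1}(R)$.

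For part (b), pick prime filtrations of $M_1$ and $M_3$. Pulling the filtration of $M_3$ back through the surjection $M_2 \twoheadrightarrow M_3$ and concatenating with the given filtration of $M_1$ produces a prime filtration of $M_2$ whose successive quotients are exactly the disjoint union of those of $M_1$ and those of $M_3$. The cycle computed from this particular filtration therefore satisfies $[M_2] = [M_1] + [M_3]$ on the nose in $Z_d(R) \oplus Z_{d-1}(R)$; descending to $\A_i(R)$ and invoking (a) delivers the stated identity for $i = d$ and $i = d-1$.

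The chief obstacle will be the exchange argument in (a): one must handle exhaustively the possible swaps of consecutive prime factors, including the case of repeated primes and the case of incomparable primes (for which the swap is geometrically trivial but still requires a small module-theoretic argument to carry out at the level of submodules). Once this case analysis is in hand, the remainder is bookkeeping, as both the independence assertion and the additivity reduce to identifying classes in $\A_*(R)$ coming from naturally constructed prime filtrations.
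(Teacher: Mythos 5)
The paper offers no proof of this statement; it is quoted verbatim from Chan~\cite{C1}, so there is no internal argument to compare yours against. Judged on its own terms, your treatment of $[M]_d$ and of part (b) is sound: the coefficient of a dimension-$d$ prime $\pp p$ in any prime filtration is $\ell(M_{\pp p})$, and pulling a prime filtration of $M_3$ back through $M_2 \twoheadrightarrow M_3$ and splicing it onto one of $M_1$ does yield a prime filtration of $M_2$ whose factors are exactly the union of the two, so (b) follows once (a) is known.

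The gap is in part (a), and it sits precisely where the theorem's content lies. Your plan rests on the unproved assertion that any two prime filtrations are joined by a chain of adjacent swaps and refinements. Schreier refinement does give two chains a common refinement, but the inserted factors are arbitrary subquotients of the $R/\pp p_i$, not prime cyclic modules, so the refined chains are no longer prime filtrations and the Jordan--H\"older template does not apply. Worse, the ``swap'' move is not well defined: given $M_{i-1} \subset M_i \subset M_{i+1}$ with factors $R/\pp p_i$ and $R/\pp p_{i+1}$, there is in general no intermediate submodule $M_i'$ realizing the same two factors in the opposite order. What one can do is replace $M_i$ by a different intermediate submodule, but then the two new factors are merely an extension pair that must itself be re-filtered, and it is exactly this re-filtration of an extension $0 \to R/\pp p \to E \to R/\pp q \to 0$ that introduces new dimension-$(d-1)$ primes and forces one to produce the relations $\divs(\pp q, x)$ explicitly (compare two cyclic submodules $Rx, Ry$ of such an $E$ and compute lengths at each height-one localization, where the DVR-free setting still requires care). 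Your sentence ``one exhibits an element $x \in \pp p_{i+1}\setminus \pp p_i$ so that the change equals $\divs(\pp p_i,x)$'' names the right kind of output but supplies neither the element nor the verification, and the claim that $\pp p_i \subsetneq \pp p_{i+1}$ forces $\dim R/\pp p_i = d$ is false in general. As written, the case analysis cannot be carried out; the argument needs to be reorganized around an induction that compares a given filtration to one passing through a chosen cyclic submodule and exhibits the discrepancy as an explicit element of $\rat_{d-1}(R)$, which is the route taken in \cite{C1}.
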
 

We make a few remarks on the cycle classes just defined: 
If $R$ is a domain and $M$ is a module
of rank $r$, then $[M]_d = r [R]$ in $\A_d(R)$. If $R$ is normal, then
$\A_{d-1}(R)$ is isomorphic to the divisor class group with $[M]_{d-1}$
mapped to $ \operatorname{cl}(M)$.


\bigskip

\section{Main Theorems}\label{general}

In this section, unless otherwise indicated, the ring $R$ is an
excellent local ring {\em regular in codimension one} by which we mean
that $R_{\pp p}$ is a field for every prime ideal $\pp p$ with $\dim
R/\pp p =d$ and $R_{\pp p}$ is a DVR for those $\pp
p$ with $\dim R/\pp p = d-1$. We sometimes denote by $\dim \pp p$ the
Krull dimension of the ring $R/\pp p$. As mentioned in Section~\ref{open}, in
this paper we also use (R1$'$) to denote this condition.

Theorem~\ref{thm.HK} proves, for any finitely generated module $M$,
the existence of the second coefficient of the Hilbert-Kunz function
$\hk M$ with respect to an arbitrary maximal primary ideal $I$.  This
generalizes the main result of Huneke, McDermott and Monsky~\cite{HMM}
where $R$ is assumed to be an excellent local normal domain.

Using the singular Riemann-Roch theorem, the second author in
\cite{K5} proves that in a Noetherian normal local domain $R$, if the
canonical module of $R$ is a torsion element in the divisor class
group, then the second coefficient of the Hilbert-Kunz function $\hk
R$ also vanishes. Theorem~\ref{thm.can} shows that this result holds
also in the more general setting of Theorem~\ref{thm.HK} in which the
second coefficient exists.

In preparation for the proofs of the main theorems, we begin with
two useful lemmas. 

\begin{lemma}\label{codim2} 
  Let $R$ be a Noetherian local ring and let $M_i$ be finitely
  generated modules over $R$ for $i=1, 2, 3, 4$.  Assume that $ 0
  \rightarrow M_1 \rightarrow M_2 \rightarrow M_3 \rightarrow M_4
  \rightarrow 0$ is exact and that $M_1$ and $M_4$ have dimension at
  most $d-2$. Then $\hk {M_2} = \hk {M_3} + O(q^{d-2})$.
\end{lemma}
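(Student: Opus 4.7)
The plan is to split the four-term exact sequence at the image module $N := M_2/M_1 \cong \operatorname{image}(M_2 \to M_3)$ into two short exact sequences
\[
0 \to M_1 \to M_2 \to N \to 0, \qquad 0 \to N \to M_3 \to M_4 \to 0,
\]
and to compare each of $\hk{M_2}$ and $\hk{M_3}$ separately to $\hk N$ up to an error of $O(q^{d-2})$. For the first short exact sequence, tensoring with $R/I_n$ and using right-exactness yields $M_1/I_n M_1 \to M_2/I_n M_2 \to N/I_n N \to 0$; the image of $M_1/I_n M_1$ in $M_2/I_n M_2$ has length $\hk{M_2} - \hk N$ and is a quotient of $M_1/I_n M_1$, so $0 \le \hk{M_2} - \hk N \le \hk{M_1}$. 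Since $\dim M_1 \le d-2$, Theorem~\ref{Monsky}(b) gives $\hk{M_1} = O(q^{d-2})$, hence $|\hk{M_2} - \hk N| = O(q^{d-2})$.

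For the second short exact sequence, right-exactness alone does not control $\hk{N} - \hk{M_3}$ in both directions; instead I would extract from the long exact sequence of $\tor$ the four-term exact sequence
\[
\tor_1^R(R/I_n, M_4) \xrightarrow{\partial} N/I_n N \to M_3/I_n M_3 \to M_4/I_n M_4 \to 0.
\]
Taking alternating lengths gives $\hk N - \hk{M_3} + \hk{M_4} = \ell(\operatorname{image}(\partial))$, and therefore
\[
|\hk N - \hk{M_3}| \le \hk{M_4} + \ell(\tor_1^R(R/I_n, M_4)).
\]
Again $\hk{M_4} = O(q^{d-2})$ by Theorem~\ref{Monsky}(b), so the remaining task is to bound the $\tor$-length.

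The main obstacle is the Hilbert-Kunz-type estimate $\ell(\tor_1^R(R/I_n, M)) = O(q^{\dim M})$, valid for any finitely generated $R$-module $M$. I would verify this by reducing via a prime filtration of $M_4$ to the case $M_4 = R/\pp p$ with $\dim R/\pp p \le d-2$, identifying $\tor_1^R(R/I_n, R/\pp p) \cong (I_n \cap \pp p)/(I_n \pp p)$, and bounding its length by an Euler-characteristic computation on a Koszul-type approximation, using $\hk{R/\pp p} = O(q^{d-2})$ from Theorem~\ref{Monsky}(b). Granting this bound, the triangle inequality yields $|\hk{M_2} - \hk{M_3}| \le |\hk{M_2} - \hk N| + |\hk N - \hk{M_3}| = O(q^{d-2})$, proving the lemma.
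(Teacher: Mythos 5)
Your proof follows essentially the same route as the paper's: split the four-term sequence at the image $N$, compare $\hk{M_2}$ and $\hk{M_3}$ to $\hk{N}$ via the tensored sequences, and control the only problematic direction with the estimate $\ell(\tor_1^R(M_4, R/I_n)) = O(q^{\dim M_4})$ together with $\hk{M_4}=O(q^{d-2})$. The one difference is that the paper simply cites \cite[Lemma~1.1]{HMM} for that Tor bound rather than reproving it, so your concluding sketch of that bound (the ``Koszul-type approximation'' step, which is the only under-detailed point) is work the paper does not undertake; the bound itself is correct and standard.
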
 

\begin{proof} 
  Let $N$ be the image of the map $M_2 \rightarrow M_3$. Then we
  obtain two exact sequences: $0 \rightarrow N \rightarrow M_3
  \rightarrow M_4 \rightarrow 0$ and $ 0 \rightarrow M_1 \rightarrow
  M_2 \rightarrow N \rightarrow 0$. It is enough to show $\hk{N} =
  \hk{M_3} +\mathcal O(q^{d-2})$ and $\hk{M_2} = \hk N +\mathcal
  O(q^{d-2})$. The equalities on the Hilbert-Kunz function are the
  results of the short exact sequences in their respective order. We
  give a proof for the first one. The second equality is based on a
  similar argument.

We tensor the short exact sequence
$ 0 \rightarrow N \rightarrow M_3 \rightarrow M_4 \rightarrow 0$ by 
$R/I_n$ and obtain
\[ \cdots \rightarrow \tor_1^R(M_4, R/I_n) \rightarrow N/I_nN
\rightarrow M_3/I_nM_3 \rightarrow M_4/I_n M_4 \rightarrow 0\]
which yields
\[ \hk{M_4} - \hk{M_3} + \hk{N} = \ell(K) \geq 0 \] where $K$ denotes
the image of the $\tor_1$-module in $N/I_nN$.  Furthermore $\ell(K)$
is bounded by $\ell(\tor_1^R(M_4, R/I_n) )$. Thus we obtain that
$\ell(\tor_1^R(M_4, R/I_n) ) = \mathcal O(q^t) $ with $t=\dim M_4$
({\em c.f.} \cite[Lemma1.1]{HMM} or Section~\ref{domain} of the current
  paper) and $\ell(K) = \mathcal O(q^{d-2})$. Also by the assumption
  and Theorem~\ref{Monsky}(b), $ \hk{M_4} =\mathcal O(q^{d-2})$. Hence
  $\hk{N} = \hk{M_3} +\mathcal O(q^{d-2})$ as desired.
\end{proof}

If  $N$ can be viewed as a finitely generated module over $R$ and
$S$ simultaneously, $\hkr{R, I}{N}$ (resp.  $\hkr{S, IS}{N} $) denotes the
Hilbert-Kunz function of N as a module over $R$ (resp. $S$) and we
skip the superscript when there is no ambiguity.

We consider the primary decomposition of the zero ideal of $R$: 
\begin{equation}\label{primary}
 (0) = \pp q_1 \cap \cdots \cap \pp q_u \cap \pp q_{u+1} \cap \cdots \cap
\pp q_{\ell} .
\end{equation} 
With the (R1) condition on the ring, one observes the following 
properties of this decomposition: 
\begin{itemize} 
\item Each primary ideal of dimension $d$ in the decomposition is a prime ideal.
\item There is not a primary ideal of dimension $d-1$ in the decomposition. 
\item Any prime ideal of $R$ of dimension $d-1$ contains a unique prime
  ideal of dimension $d$. 
\end{itemize} 

We assume that $\dim{\pp q_i} = d$ for $i=1, \dots, u$ and $\dim{\pp
  q_i} \leq d-2$ otherwise. We further observe that the Krull dimension of
the module $\pp q_1 \cap \cdots \cap \pp q_u $ is at most $d-2$ because $ \pp q_1
\cap \cdots \cap \pp q_u$ becomes zero when localizing at prime ideals
of dimension larger than $d-2$.  In fact let $\pp p$ be a prime ideal
of dimension at least $d-1$. Since $\pp p$ contains a unique
$d$-dimensional prime ideal in $R$, say $\pp q_1$, and since $R_{\pp
  p}$ is a regular local ring, then the localized ideal $\pp q_1
R_{\pp p}$ must be the zero ideal.

\begin{theorem}\label{thm.HK} 
  Let $(R, \pp m)$ be an excellent local ring of dimension $d$ and
  positive characteristic $p$ whose residue field is perfect. Assume
  that $R$ satisfies (R1$'$) condition.  Let $I$ be an $\pp m$-primary
  ideal.  Then there exist constant $\alpha(M)$ and $\beta(M)$ in
  $\mathbb R$ such that the Hilbert-Kunz function of $M$ with respect
  to $I$ is
\[ \hk M = \alpha(M) q^d + \beta(M) q^{d-1} + \mathcal O(q^{d-2}) . \]
\end{theorem}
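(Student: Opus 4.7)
The plan is to reduce the statement to the normal-domain case of Huneke--McDermott--Monsky (Theorem~\ref{thmHMM}), following the strategy hinted in the introduction. The reduction proceeds in two stages: first, reduce from $R$ to the quotients $T_j = R/\pp q_j$ by the dim-$d$ minimal primes of $R$; second, reduce from each $T_j$ to its integral closure, which is a finite, semilocal normal domain to which Theorem~\ref{thmHMM} applies at each of its maximal ideals.

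For the second stage, fix a dim-$d$ minimal prime $\pp q$ and set $T = R/\pp q$. The (R1$'$) condition descends from $R$ to $T$: for any dim-$(d-1)$ prime $\pp p$ of $R$ containing $\pp q$, the ideal $\pp q R_\pp p$ is a prime of the DVR $R_\pp p$ strictly contained in its maximal ideal, hence zero, so $T_\pp p = R_\pp p$ is a DVR. Thus $T$ is an excellent local (R1$'$)-domain; since it is excellent, its integral closure $S$ is a finite $T$-module and a semilocal normal domain, with cokernel $C$ of dimension $\leq d-2$ (using (R1$'$) on $T$). Lemma~\ref{codim2}, applied to the 4-term exact sequence obtained by tensoring $0 \to T \to S \to C \to 0$ with any finitely generated $T$-module $N$ (both $\tor_1^T(N,C)$ and $N \otimes_T C$ have dimension $\leq d-2$), yields $\hkr{T,IT}{N} = \hkr{T,IT}{N \otimes_T S} + \mathcal O(q^{d-2})$. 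Decomposing $\ell(N \otimes_T S / I^{[p^n]})$ as a sum over the maximal ideals of $S$ of local lengths, each summand is a Hilbert-Kunz function of a module over an excellent local normal domain whose residue field is a finite, hence perfect, extension of the residue field of $R$; Theorem~\ref{thmHMM} then produces the required expansion summand by summand.

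For the first stage I begin with $M = R$. The remarks preceding the theorem give $\dim(\pp q_1 \cap \cdots \cap \pp q_u) \leq d-2$ and show that the natural injection $R/(\pp q_1 \cap \cdots \cap \pp q_u) \hookrightarrow \bigoplus_j T_j$ has cokernel of dimension $\leq d-2$ (any dim-$(d-1)$ prime of $R$ contains exactly one $\pp q_j$). Two applications of Lemma~\ref{codim2} yield $\hk R = \sum_j \hk{T_j} + \mathcal O(q^{d-2})$; combined with the second stage applied to $N = T_j$ over $T_j$, this proves the theorem for $M = R$. For a general finitely generated $M$, I would use a prime filtration of $M$ and handle each subquotient $R/\pp p_i$ according to dimension: those with $\dim \pp p_i \leq d-1$ already have Hilbert-Kunz functions of the form $\alpha q^{d-1} + \mathcal O(q^{d-2})$ by Theorem~\ref{Monsky}(b); those with $\pp p_i = \pp q_j$ (the only possibility for $\dim \pp p_i = d$) are handled directly by the second stage applied to $N = T_j$.

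The main obstacle is executing the filtration argument for a general $M$ cleanly enough to preserve the $\beta(M) q^{d-1}$ coefficient. Combining short exact sequences $0 \to M_{i-1} \to M_i \to R/\pp p_i \to 0$ naively introduces $\tor_1^R(R/\pp p_i, R/I_n)$ errors of order $\mathcal O(q^{\dim \pp p_i})$, and for $\dim \pp p_i = d-1$ the resulting $\mathcal O(q^{d-1})$ bound is too large. Overcoming this will require exploiting the (R1$'$) structure more carefully, for example by first isolating the dim-$d$ components of $M$ via the primary decomposition of its annihilator (so that the residual filtration consists only of subquotients of dimension $\leq d-1$), and then organizing the cumulative error so that it lands in the $\mathcal O(q^{d-2})$ remainder by a finer analysis paralleling the $M=R$ case.
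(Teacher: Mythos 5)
Your reduction for $M=R$ and your ``second stage'' are essentially the paper's argument, but there is a genuine gap exactly where you flag it: the passage to a general finitely generated module $M$. The filtration strategy you sketch fails for the reason you give --- a subquotient $R/\pp p_i$ with $\dim R/\pp p_i = d-1$ contributes a $\tor_1^R(R/\pp p_i, R/I_n)$ error that is only $\mathcal O(q^{d-1})$, which destroys the coefficient $\beta(M)$ --- and the remedy you propose (isolating the dimension-$d$ components via the primary decomposition of the annihilator and ``organizing the cumulative error'') is not an argument; no reordering of the filtration removes the $\mathcal O(q^{d-1})$ Tor terms coming from the $(d-1)$-dimensional subquotients.

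The fix is the device you already used in your second stage, applied one level up. The paper (after completing $R$, so that each $\overline{R/\pp q_i}$ is a \emph{local} normal domain rather than a semilocal one) forms the single four-term exact sequence $0 \to K \to R \stackrel{\eta}{\to} \oplus_{i=1}^u \overline{R/\pp q_i} \to C \to 0$ and tensors it with $M$. Because $\eta$ is an isomorphism after localizing at every prime of dimension $\geq d-1$ (by (R1$'$)), so is $\eta\otimes 1_M$; hence the kernel and cokernel of $\eta\otimes 1_M$ have dimension $\leq d-2$, and Lemma~\ref{codim2} gives $\hkr{R,I}{M}=\sum_i \hkr{R,I}{\overline{R/\pp q_i}\otimes M}+\mathcal O(q^{d-2})$ directly, with no filtration and no Tor terms of order $q^{d-1}$. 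The point you are missing is that Theorem~\ref{thmHMM} applies to an arbitrary finitely generated module over the normal domain $\overline{R/\pp q_i}$, not just to the ring itself, so one never needs to break $M$ into cyclic pieces: it suffices to compare $M$ with the modules $\overline{R/\pp q_i}\otimes M$, and that comparison is an isomorphism in codimension $\leq 1$. With this replacement your outline becomes the paper's proof.
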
 

\begin{proof} 
 Since all conditions pass through completion and the Hilbert-Kunz
  function remains the same, we replace $R$ by its completion and assume
  that it is complete. 

Let $\pp q_1, \dots, \pp q_u$ be the minimal prime ideals of $R$ such that
$\dim R/\pp q_i = d$. The following sequence is
exact
\begin{equation}\label{reducedexact}
  0  \longrightarrow K \longrightarrow R  \stackrel{\eta}
  {\longrightarrow} \oplus_{i=1}^u \overline{R/\pp q_i} 
  \longrightarrow C \longrightarrow 0 ,
\end{equation} 
where $\eta$ is the composition of the usual projection of a ring to
its quotient followed by the inclusion to the integral closure of the
quotient, and $K$ and $C$ are the kernel and cokernel of $\eta$. Here
we remark that $\overline{R/\pp q_i}$ is a local normal domain since
$R/\pp q_i$ is a complete local domain. For
any prime ideal $\pp p$ of dimension at least $d-1$, $\pp p$ contains
exactly one of $\pp q_1, \dots, \pp q_u$, say $\pp q_1$. Moreover $\pp 
q_1 R_{\pp p}$ is the zero ideal since $R_{\pp p}$ is regular. This implies
$R_{\pp p} \cong (\oplus_{i=1}^u \overline{R/\pp q_i} )_{\pp p}$ and so
$K$ and $C$ have dimension at most $d-2$. 

For an arbitrary module $M$, (\ref{reducedexact}) induces the exact
sequence 
\[ 0 \longrightarrow K' \longrightarrow R \otimes M \stackrel{\eta
  \otimes 1}{\longrightarrow} \oplus_{i=1}^u (\overline{R/\pp q_i}
\otimes M ) \longrightarrow C' \longrightarrow 0 \] where $C' =
C\otimes M$ and $K'$ is the kernel of $\eta \otimes 1$, and they also
have dimension at most $d-2$. By Lemma~\ref{codim2}, we have
\[ \begin{array}{ccl}
\hkr{R,I}{M} & = & \sum_{i=1}^{u} \hkr{R,I}{ \overline{(R/\pp q_i)}
  \otimes M} + \mathcal O(q^{d-2}) \\ & = & 
\sum_{i=1}^{u} g_i \hkr{\overline{(R/\pp q_i)}, I \overline{(R/\pp
    q_i)} }   { \overline{(R/\pp q_i)} \otimes M} + \mathcal O(q^{d-2})
\end{array}
\]
where $g_i$ is defined to be the degree of the field extension 
$[ \kappa(\overline{(R/\pp q_i)} ) : \kappa(R)] $ in which $\kappa(\cdot)$
 denotes residue field of the local ring. 
Now each $\overline{ R/\pp
  q_i}$ is a normal local ring so the proof is completed by applying
Theorem~\ref{thmHMM}. 

\end{proof}

For the remaining of the section, we assume that $R$ is a homomorphic
image of a regular local ring $A$. We define the canonical module of
$R$ to be $\omega_R = \ext^c(R, A)$ where $c =\dim A - \dim R$ is 
the codimension. For $\overline R$, we put
$\omega_{\overline R} = \ext^c(\overline R, A)$ with $c= \dim A - \dim R$. Here
the definition of the canonical module follows from \cite[Satz~5.12]{HeK} (see
also \cite{Ao} and \cite[Remark~3.5.10]{BrH}).

\begin{theorem}\label{thm.can} 
Let $(R, \pp m)$ be as in Theorem~\ref{thm.HK}. 
Assume also that $R$ is the homomorphic image of a regular 
local ring $A$. Let $\omega_R$ be the canonical module of $R$. 
If $[\omega_R]_{d-1} =0$ in $\A_{d-1}(R)_{\mathbb Q}$, then 
$\beta(R) $ in the Hilbert-Kunz function $\hk R$ vanishes.
\end{theorem}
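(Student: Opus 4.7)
The plan is to follow the reduction-to-normal-domains strategy of Theorem~\ref{thm.HK} and then invoke the second author's theorem from \cite{K5}. After replacing $R$ by its completion, the exact sequence (\ref{reducedexact}) together with the final computation in the proof of Theorem~\ref{thm.HK} gives
\[
\beta(R) \;=\; \sum_{i=1}^{u} g_i\,\beta_{I\overline{R/\pp q_i}}(\overline{R/\pp q_i}).
\]
Since each $\overline{R/\pp q_i}$ is a complete normal local domain, \cite{K5} tells us that $\beta_{I\overline{R/\pp q_i}}(\overline{R/\pp q_i})$ vanishes whenever $[\omega_{\overline{R/\pp q_i}}]_{d-1} = 0$ in $\A_{d-1}(\overline{R/\pp q_i})_{\mathbb Q}$. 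So the remaining task is to show that the hypothesis $[\omega_R]_{d-1} = 0$ descends to each component.

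For the descent I would first relate the canonical modules themselves. Applying $\ext^{\bullet}_A(-,A)$ to (\ref{reducedexact}), split into two short exact sequences through the image of $\eta$, and using that $K$ and $C$ have grade at least $c+2$ over $A$ (equivalently $\ext^j_A(K,A)=\ext^j_A(C,A)=0$ for $j\le c+1$, where $c=\dim A-d$), the resulting long exact sequences collapse to an $R$-module isomorphism
\[
\omega_R \;\cong\; \bigoplus_{i=1}^{u}\omega_{\overline{R/\pp q_i}}.
\]
Each summand on the right is the canonical module of $\overline{R/\pp q_i}$ computed via the $A$-module structure; it agrees with the intrinsic canonical module of the complete normal domain $\overline{R/\pp q_i}$ by a standard argument (cf.\ \cite{BrH}). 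Theorem~\ref{thmChan}(b) then yields the identity $[\omega_R]_{d-1} = \sum_i [\omega_{\overline{R/\pp q_i}}]_{d-1}$ in $\A_{d-1}(R)_{\mathbb Q}$.

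Next I would establish the direct-sum decomposition $\A_{d-1}(R)=\bigoplus_{i=1}^{u}\A_{d-1}(\overline{R/\pp q_i})$ in two stages. At the cycle level, (R1$'$) forces every $(d-1)$-dimensional prime of $R$ to contain a unique $\pp q_i$, so $Z_{d-1}(R)=\bigoplus_i Z_{d-1}(R/\pp q_i)$; since the only $d$-dimensional primes of $R$ are the $\pp q_i$ themselves, the generators $\divs(\pp q_i,x)$ of ${\rm Rat}_{d-1}(R)$ split along the same decomposition, yielding $\A_{d-1}(R)=\bigoplus_i\A_{d-1}(R/\pp q_i)$. Next, the finite birational inclusion $R/\pp q_i\hookrightarrow\overline{R/\pp q_i}$ induces an isomorphism $\A_{d-1}(\overline{R/\pp q_i})\to\A_{d-1}(R/\pp q_i)$ via pushforward: (R1$'$) forces $R_{\pp p}=(\overline{R/\pp q_i})_{\pp p'}$ for every $(d-1)$-dimensional prime $\pp p\supseteq\pp q_i$ and its unique lift $\pp p'$, giving a bijection on $(d-1)$-dimensional prime cycles with trivial residue-field extensions, and any principal divisor on $\overline{R/\pp q_i}$ is a difference of two principal divisors coming from elements of the common fraction field of $R/\pp q_i$ and $\overline{R/\pp q_i}$.

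Putting the pieces together, $[\omega_R]_{d-1} = 0$ forces $[\omega_{\overline{R/\pp q_i}}]_{d-1} = 0$ in $\A_{d-1}(\overline{R/\pp q_i})_{\mathbb Q}$ for every $i$, so each $\beta_{I\overline{R/\pp q_i}}(\overline{R/\pp q_i}) = 0$ by \cite{K5}, and hence $\beta(R)=0$. The main obstacle will be the third paragraph: verifying both the direct-sum decomposition of $\A_{d-1}(R)$ and the pushforward isomorphism along normalization requires careful tracking of cycles and rational equivalence through two successive reductions. Once that bookkeeping is secured, the Ext calculation, Theorem~\ref{thmChan}, and Kurano's theorem immediately combine to close the argument.
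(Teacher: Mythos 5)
Your proposal is correct and follows essentially the same route as the paper: reduce to the normal components $\overline{R/\pp q_i}$ via the sequence (\ref{reducedexact}), transport the hypothesis $[\omega_R]_{d-1}=0$ across the isomorphism $\A_{d-1}(R)\simeq\oplus_i\A_{d-1}(\overline{R/\pp q_i})$, and invoke \cite{K5}. The only (harmless) variation is in how the canonical modules are compared: you apply $\ext_A^{\bullet}(-,A)$ with a grade argument to get an actual isomorphism $\omega_R\cong\oplus_i\omega_{\overline{R/\pp q_i}}$, whereas the paper applies $\Hom_R(-,\omega_R)$ together with the duality $\omega_{\overline{R/\pp q_i}}\cong\Hom_R(\overline{R/\pp q_i},\omega_R)$ and settles for a four-term exact sequence whose ends have dimension at most $d-2$, which suffices by Theorem~\ref{thmChan}(b).
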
 

\begin{proof}
As proving Theorem~\ref{thm.HK}, we also replace $R$ by its completion
and assume that $R$ is complete. Here note that we can show 
$[ \omega_R \otimes \hat{R}]_{d-1} =0$ using the (R1')
condition of $R$. 
We write the Hilbert-Kunz function of $R$ as $\hkr{R,I}{R} = \alpha
q^d + \beta q^{d-2} + \mathcal O(q^{d-2})$. 
Recall the proof of Theorem~\ref{thm.HK} and  for each $i$ let
$\alpha_i$ and $\beta_i$ be real numbers in the Hilbert-Kunz function
of $\overline{(R/\pp q_i)}$ such that
\[ \hkr{\overline{(R/\pp q_i)}, I \overline{(R/\pp q_i)} }
{\overline{(R/\pp q_i)}} = \alpha_i q^d + \beta_i q^{d-1} + \mathcal
O(q^{d-2}) . \] Then $\beta = \sum_i g_i \beta_i$ where $g_i = [
\kappa(\overline{(R/\pp q_i)} ) : \kappa(R)] $ also as defined in the
previous proof. Obviously in order to prove $\beta=0$, it suffices to
prove $\beta_i =0$ for each $i$.

{\em Claim:}  {\em That $[\omega_R]_{d-1} =0$ in $\A_{d-1}(R)_{\mathbb
  Q}$ implies $[\omega_{\overline{(R/\pp q_i)} } ]_{d-1}=0$ in $\A_{d-1}(
\overline{(R/\pp q_i)} )_{\mathbb Q}$.} 

Assume the claim. Since $\overline{(R/\pp
  q_i)}$ is a normal local ring and $[\omega_{\overline{(R/\pp
    q_i)}}]_{d-1}$ vanishes in $\A_{d-1}( \overline{ (R/\pp q_i) } )
_{\mathbb Q}$, then $\beta_i =0$ by
\cite[Corollary~1.4]{K5} and the theorem is proved. 

Now we prove the above claim.  Recall the minimal prime ideals $\pp
q_1, \dots, \pp q_u$ of $R$ and the map $\eta$ in
(\ref{reducedexact}). Since $R$ and $\oplus_{i=1}^{u} \overline{
  (R/\pp q_i) }$ are isomorphic when localizing at prime ideals of
dimension $\geq d-1$, $\eta$ also induces an isomorphism on
the $(d-1)$-component of the Chow groups
\begin{equation}\label{d-1chow}
\oplus_{i=1}^u    \A_{d-1}(\overline{ (R/\pp q_i) } ) \underset{\xi}{\simeq} \A_{d-1}(R) .
\end{equation}

We observe that 
$\omega_{ \overline{ (R/\pp q_i) } } \cong \Hom_R(\overline{ (R/\pp
  q_i) }, \omega_R)$. 
Indeed let $\mathbb I^{\bullet}$ be an injective resolution of $A$, then
there exists a quasi-isomorphism between the two complexes
$ \Hom_A( \overline{ (R/\pp q_i)  }, \mathbb
 I^{\bullet})$ and $ \Hom_R( \overline{ (R/\pp q_i) } , \Hom_A(R,
 \mathbb I^{\bullet}))$. Hence by the definition of the canonical
 modules we have the isomorphism just mentioned. 
Then we apply $\Hom( -, \omega_R)$ to the exact sequence
(\ref{reducedexact}) and obtain the following exact sequence
for some $C''$ and $K''$ of dimension at most $d-2$: 
\begin{equation}\label{canonicalexact}
 0 \longrightarrow C'' \longrightarrow \Pi_{i=1}^u \omega_{
  \overline{ (R/\pp q_i) }} \stackrel{\theta} \longrightarrow \omega_R \longrightarrow
K'' \longrightarrow 0 . 
\end{equation}
Since both $\xi$ in (\ref{d-1chow}) and $\theta$ in
(\ref{canonicalexact}) are induced by $\eta$, we may conclude that
\[ \xi( \oplus [ \omega_{ \overline{ (R/\pp q_i) } } ]_{d-1} ) = [\omega_R]_{d-1}. \]
Hence $ [\omega_R]_{d-1} =0$ if and only if $[ \omega_{ \overline{ (R/\pp
    q_i) } } ]_{d-1}  =0$ for each $i = 1, \dots, u$ and the proof of
the claim is completed.

\end{proof}

Let $C(R)$ be the category of bounded complexes of free $R$-modules
with support in $\{ \pp m \}$. We recall the definition of numerical
equivalence in Kurano~\cite{K16}. Define a
subgroup of $\A_*(R)_{\mathbb Q}$
\[ N\A_*(R)_{\mathbb Q} = \{ \gamma \in \A_*(R)_{\mathbb Q} |
  \ch(\alpha) \cap \gamma =0 \mbox{ for any } \alpha \in C(R) \} 
\] 
in which $\ch(\alpha) \cap \gamma$ is the intersection of the
localized Chern character of $\alpha$ with $\gamma$ in the sense of
\cite{F} or \cite{R1}.  An element in $\A_*(R)_{\mathbb Q}$ is said to
be {\em numerically equivalent} to zero if it is an element in
$ N\A_*(R)_{\mathbb Q}$. The group modulo the numerical
equivalence, denoted $\overline{ \A_*(R)_{\mathbb Q} }$, is called
{\em the numerical Chow group}. It is proved in \cite{K5} that
$N\A_*(R)_{\mathbb Q}$ maintains the grading that comes from the
dimension of cycles; that is, 
\[ N\A_*(R)_{\mathbb Q} = \oplus_i N\A_i(R)_{\mathbb Q}.\]

The singular Riemann-Roch theorem was first applied to study the
Hilbert-Kunz function in Kurano~\cite{K16, K5}. In the following
Lemma~\ref{RRhk}, we restate a result proved by a computational
technique presented in \cite[Example~3.1(3)]{K5} ({\em c.f.} also
\cite[Proposition~1]{Sei}) .

\begin{lemma}[\cite{K5,Sei}]\label{RRhk}  
Let $R$ be the homomorphic image of a regular local ring. We assume
that $R$ is an $F$-finite Cohen-Macaulay ring such that the
residue class field is perfect. Let $I$ be 
an $\pp m$-primary ideal of finite projective dimension. Then the
Hilbert-Kunz function of $R$ with respect to $I$ is a polynomial in $p^n$. 

Precisely, if we let $\mathbb G_{\bullet}$ be a finite free resolution
of $R/I$ and let $c_i $ be in $\A_i(R)_{\mathbb Q}$ such that the Todd
class $\td([R])$ is $ c_d + c_{d-1} + \cdots + c_1 + c_0$ in
$\A_*(R)_{\mathbb Q}$.  Then
\[ \hkr{R,I}{R}= \sum_{i=0}^d (\ch_i(\mathbb G_{\bullet}) \cap c_i)
q^i. \]

\end{lemma}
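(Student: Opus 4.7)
The plan is to realize $\ell(R/I_n)$ as the Euler characteristic of a complex in $C(R)$, invoke the singular Riemann--Roch theorem, and then track how the localized Chern character transforms under the $n$-fold Frobenius.

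First, since $R$ is Cohen--Macaulay and $R/I$ has finite projective dimension, the Peskine--Szpiro acyclicity theorem guarantees that applying the $n$-fold Frobenius base change to the finite free resolution $\mathbb G_{\bullet} \to R/I$ produces a complex $F^n(\mathbb G_{\bullet})$ which is still acyclic in positive degrees and satisfies $H_0(F^n(\mathbb G_{\bullet})) = R/I^{[p^n]} = R/I_n$. Because its only homology has finite length, $F^n(\mathbb G_{\bullet}) \in C(R)$ and
$$ \hkr{R,I}{R} \;=\; \ell(R/I_n) \;=\; \chi\bigl(F^n(\mathbb G_{\bullet})\bigr). $$

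Next, I would apply the singular Riemann--Roch theorem for local rings in the form used in \cite{R1,K16,K5}: for any $\alpha \in C(R)$, the Euler characteristic decomposes along the dimension grading on $\A_*(R)_{\mathbb Q}$ as
$$ \chi(\alpha) \;=\; \sum_{i=0}^{d} \ch_i(\alpha) \cap c_i, $$
with $\td([R]) = c_d + c_{d-1} + \cdots + c_0$. Applied to $\alpha = F^n(\mathbb G_{\bullet})$, this reduces the lemma to understanding how $\ch_i(F^n(\mathbb G_{\bullet})) \cap c_i$ depends on $n$.

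The crux is then the Frobenius scaling identity
$$ \ch_i\bigl(F^n(\alpha)\bigr) \cap c_i \;=\; q^i \bigl(\ch_i(\alpha) \cap c_i\bigr), \qquad \alpha \in C(R), $$
established by the calculation in \cite[Example~3.1(3)]{K5} and independently in \cite{Sei}. Intuitively, the $n$-fold Frobenius, viewed as a finite self-morphism of $\spec R$, multiplies an $i$-dimensional cycle class by $p^{ni}$ rationally, and the $F$-finiteness together with perfectness of the residue field supply the correct degree data; the projection formula for localized Chern characters then converts this cycle-level scaling into the claimed identity. Combining all three steps immediately yields
$$ \hkr{R,I}{R} \;=\; \sum_{i=0}^{d} \bigl(\ch_i(\mathbb G_{\bullet}) \cap c_i\bigr) q^i, $$
proving the polynomial form.

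The main technical obstacle is this third step. The first two are standard reductions using finite projective dimension plus Cohen--Macaulayness and the singular Riemann--Roch theorem; it is the Frobenius compatibility of the localized Chern characters that is responsible both for polynomiality in $q$ and for the explicit $q^i$ weights on the dimension-$i$ contributions, and it is the step where the $F$-finiteness and the perfect residue field hypotheses are genuinely needed.
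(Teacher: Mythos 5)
Your proof is correct and follows essentially the same route as the paper: both realize $\ell(R/I_n)$ as the Euler characteristic of a complex with finite--length homology and then apply the singular Riemann--Roch theorem together with a degree-$i$ Frobenius-scaling identity. The only (essentially cosmetic) difference is where the Frobenius twist sits --- you apply the Frobenius functor to the complex and scale the localized Chern character, $\ch_i(F^n(\mathbb G_{\bullet}))\cap c_i = q^i\,\bigl(\ch_i(\mathbb G_{\bullet})\cap c_i\bigr)$, whereas the paper keeps $\mathbb G_{\bullet}$ fixed, computes $\chi_{\mathbb G_{\bullet}}(R^{1/p^n})$, and scales the Todd class via $\td_i([R^{1/p^n}])=p^{in}c_i$ from \cite[Lemma~2.2(iii)]{K16}; the two formulations are equivalent by the projection formula.
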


\begin{proof} 
By definition the Euler characteristic is the alternating sum of the length of
homology modules; that is 
\[ \chi_{\mathbb G_{\bullet} } ( R^{\frac{1}{p^n} }) = \sum_i (-1)^i
\ell ( H_i (\mathbb G_{\bullet} \otimes R^{\frac{1}{p^n} } ) ) . \]
Since the Frobenius functor is exact and when applied to a complex, we
simply raise the boundary maps by the corresponding Frobenius power,
therefore
\[ \begin{array}{ccl}
\chi_{\mathbb G_{\bullet} } ( R^{\frac{1}{p^n} }) 
& = & \ell ( H_0 (\mathbb G_{\bullet} \otimes R^{\frac{1}{p^n} } ) )
\\
& = & \ell ( R/ I^{ [p^n] } ) = \hkr{R,I}{R}. 
\end{array}
\]
On the other hand by the
singular Riemann-Roch theorem, we have 
\[\chi_{\mathbb G_{\bullet} } ( R^{\frac{1}{p^n} })  =  
\ch (\mathbb G_{\bullet} ) ( \td ( [ R^{\frac{1}{p^n}} ]) )
\]
It is known that $ \td ( [ R^{\frac{1}{p^n}} ]) $ decomposes in the
Chow group and that we have for the $i$-th component the formula
$\td_i ([R^{\frac{1}{p^n}}]) = p^{in} \td_i([R]) = p^{in} c_i$
(\cite[Lemma~2.2~({\em iii})]{K16}).
Then

\[ \begin{array}{ccl}
\chi_{\mathbb G_{\bullet} } ( R^{\frac{1}{p^n} }) & = & 
\ch (\mathbb G_{\bullet} ) ( \td ( [ R^{\frac{1}{p^n}} ]) ) \\
& = & \ch (\mathbb G_{\bullet}) ( p^{dn} c_d + \cdots + p^n
c_1 + c_0 ) \\
& = & \sum_{i=0}^d (p^n)^i \ch_i(\mathbb G_{\bullet}) \cap c_i .
\end{array}
\]

Notice that $\ch_i(\mathbb G_{\bullet}) \cap c_i $ is in $\A_*(R/\pp
m)_{\mathbb Q} \simeq \mathbb Q$. It is clear now that the Hilbert-Kunz
function of $R$ with respect to $I$  is a polynomial in $q = p^n$ with
coefficients described by the intersection of Tood classes and localized Chern characters
\[ \hkr{R,I}{R} = \chi_{\mathbb G_{\bullet} } ( R^{\frac{1}{p^n} }) =
\sum_{i=0}^d (\ch_i(\mathbb G_{\bullet}) \cap c_i) (p^n)^i =
\sum_{i=0}^d (\ch_i(\mathbb G_{\bullet}) \cap c_i) q^i. \]

\end{proof}

\begin{theorem}\label{CMcase}
  Let $R$ be the homomorphic image of a regular local ring. We assume
  that $R$ is a Cohen-Macaulay ring and is $F$-finite such that the
  residue class field is perfect. Then the
  following statements are equivalent: \\ $(a)$ $\beta_I(R) =0$ for any
  $\pp m$-primary ideal $I$ of finite projective dimension. \\ $(b)$
  $\td_{d-1}([R]) =0$ in $\overline{\A_{d-1}(R)_{\mathbb Q} }$ where
  $\td_{d-1}([R])$ is the $(d-1)$-component of the Todd class of $[R]$.

In addition if $R_{\pp p}$ is Gorenstein for all minimal prime ideals
$\pp p$ of $R$, then $(a)$ and $(b)$ are equivalent
to $(c)$: \\
$(c)$ $[\omega_R]_{d-1}= [R]_{d-1}$ in
$\overline{\A_{d-1}(R)_{\mathbb Q}}$.
\end{theorem}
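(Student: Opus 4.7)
The plan is to express $\beta_I(R)$ via Lemma~\ref{RRhk} as a Chern-character pairing and then match the resulting vanishing condition against the definition of numerical equivalence. Setting $c_i=\td_i([R])$, Lemma~\ref{RRhk} gives, for any finite free resolution $\mathbb G_\bullet$ of $R/I$,
$$\beta_I(R)=\ch_{d-1}(\mathbb G_\bullet)\cap c_{d-1}.$$
Since $\mathbb G_\bullet$ is a bounded complex of free modules whose homology is concentrated at $\{\pp m\}$ (because $I$ is $\pp m$-primary), it belongs to $C(R)$, so the implication $(b)\Rightarrow(a)$ will follow immediately from the definition of $N\A_{d-1}(R)_{\mathbb Q}$.

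For the converse $(a)\Rightarrow(b)$, I would first use the grading of $N\A_*(R)_{\mathbb Q}$ to reduce to proving $\ch_{d-1}(\alpha)\cap c_{d-1}=0$ for every $\alpha\in C(R)$, given that this pairing vanishes whenever $\alpha$ is a free resolution of some $R/I$ with $I$ an $\pp m$-primary ideal of finite projective dimension. Because $R$ is Cohen--Macaulay and $F$-finite, every system of parameters furnishes such an ideal (its Koszul complex is a finite free resolution), supplying an abundant stock of test complexes. Following the strategy of \cite{K16,K5,Sr}, one then argues that the subgroup of $C(R)$ generated by these resolutions has the same image in $\Hom(\A_{d-1}(R)_{\mathbb Q},\mathbb Q)$ under $\alpha\mapsto(\ch_{d-1}(\alpha)\cap -)$ as all of $C(R)$. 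Verifying this density statement --- equivalently, that the pairing detected by perfect quotients $R/I$ already defines numerical equivalence in degree $d-1$ --- is the main technical obstacle, and this is precisely where the $F$-finite and Cohen--Macaulay hypotheses enter.

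For the additional equivalence $(a),(b)\Leftrightarrow(c)$ under the Gorenstein-at-minimal-primes assumption, I would begin by observing that $(\omega_R)_{\pp p}\simeq R_{\pp p}$ at every minimal prime $\pp p$, so $[\omega_R]_d=[R]_d$ and the difference $[\omega_R]_{d-1}-[R]_{d-1}\in\A_{d-1}(R)_{\mathbb Q}$ plays the role of the canonical divisor class. The Chern-root expansion of the Todd class then yields a relation of the form
$$\td_{d-1}([R])=-\tfrac{1}{2}\bigl([\omega_R]_{d-1}-[R]_{d-1}\bigr)$$
(essentially the computation underlying \cite[Corollary~1.4]{K5} specialized to the rank-one setting), whence $\td_{d-1}([R])\equiv 0$ in $\overline{\A_{d-1}(R)_{\mathbb Q}}$ if and only if $[\omega_R]_{d-1}=[R]_{d-1}$ there. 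Combined with the first two steps, this will give the final equivalence $(a)\Leftrightarrow(b)\Leftrightarrow(c)$.
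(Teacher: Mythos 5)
Your outline of $(b)\Rightarrow(a)$ matches the paper's argument and is complete: $\mathbb G_\bullet\in C(R)$ and $\beta_I(R)=\ch_{d-1}(\mathbb G_\bullet)\cap c_{d-1}$, so numerical triviality of $c_{d-1}$ kills $\beta_I(R)$. The genuine gap is in $(a)\Rightarrow(b)$: you correctly isolate the crux --- that vanishing of $\ch_{d-1}(\cdot)\cap c_{d-1}$ on resolutions of the perfect quotients $R/I$ must propagate to all of $C(R)$ --- but you then label this ``density statement'' the main technical obstacle and leave it unproved, and that step is the actual content of the implication. The paper supplies a concrete mechanism: for any $R$-module $M$ of finite length and finite projective dimension, Kumar's push-out argument (choose a maximal regular sequence $f_1,\dots,f_d$ in $\annih(M)$, map $(f_1,\dots,f_d)\to M$ onto two minimal generators, and pass to the push-out $N=(M\oplus R)/B$, which needs one fewer generator) yields after iteration a relation $[M]+\sum_i[R/I_i]=[R/I]$ in $\K(C)$, where the $I_i$ are parameter ideals and $I$ is some $\pp m$-primary ideal of finite projective dimension. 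Hypothesis $(a)$ kills the pairing on every term on the right, hence on $[M]$; the Roberts--Srinivas identification $\K(C(R))=\K(C)$ then converts vanishing on all such $M$ into vanishing of $\ch_{d-1}(\mathbb F_\bullet)\cap c_{d-1}$ for every $\mathbb F_\bullet\in C(R)$, which is exactly $c_{d-1}=0$ in $\overline{\A_{d-1}(R)_{\mathbb Q}}$. Note also that your proposed stock of test complexes (Koszul complexes of systems of parameters) is not enough on its own: the argument genuinely uses hypothesis $(a)$ for the non-parameter ideal $I$ that the push-out construction produces.

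For $(c)$, your displayed identity $\td_{d-1}([R])=-\tfrac12\bigl([\omega_R]_{d-1}-[R]_{d-1}\bigr)$ is correct, but ``the Chern-root expansion yields'' is not yet an argument in this singular, non-domain setting, and \cite[Corollary~1.4]{K5} is stated for normal domains where $[R]_{d-1}=0$. The paper's route is: Gorenstein at minimal primes gives $\omega_R\otimes_R Q(R)\simeq Q(R)$, hence an embedding $\omega_R\hookrightarrow R$ and the cycle relation $[R/\omega_R]_{d-1}=[R]_{d-1}-[\omega_R]_{d-1}$; then the Cohen--Macaulay duality $\td([\omega_R])=c_d-c_{d-1}+c_{d-2}-\cdots$, additivity of the Todd class, and the top-term property applied to the $(\le d-1)$-dimensional module $R/\omega_R$ give $[R/\omega_R]_{d-1}=2c_{d-1}$, from which the equivalence of $(b)$ and $(c)$ follows. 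These steps need to be supplied for your sketch to close.
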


\begin{proof}
The equivalence of ($a$) and ($b$) will be proved by using the method in the
proof of Theorem~6.4(2) in \cite{K16}. We give a complete proof here. 
Lemma~\ref{RRhk} gives a presentation for the second coefficient of $\hkr{R,I}{R}$:
\[ \beta_I(R) = \ch_{d-1}(\mathbb G_{\bullet}) \cap c_{d-1} .\] Assume
(b); {\em i.e.}, $c_{d-1}=\td_{d-1}([R])=0$ in $\overline{
  \A_*(R)_{\mathbb Q}}$.  Then by the definition of the numerical
equivalence, $\ch_{d-1}(\mathbb G_{\bullet}) \cap c_{d-1}= 0$ since
$\mathbb G_{\bullet} \in C(R)$. Hence $\beta_I(R) =0$ and this proves
(b) $\Rightarrow$ (a).

Conversely, assume (a). Let $\K(C)$ be the Grothendieck group of the
category $C$ of $R$-modules that have finite projective dimension and
finite length. For any $M$ in $C$, there
exist finitely many ideals $I_1, \dots, I_{\ell}$ generated by maximal
regular sequences and an $\pp m$-primary ideal $I$ of finite projective
dimension such that
\begin{equation}\label{Srinivas}
 [M] + \sum_{i=1}^{\ell} [R/I_i] = [R/I] . 
\end{equation}
in $\K(C)$.

We see (\ref{Srinivas}) by applying Kumar's proof for Hochster's
theorem on $\K(C)$ (\cite[Lemma~9.10]{Sr}). Let $\pp a$ be the
annihilator of $M$. Then there exists a regular sequence $f_1, \dots,
f_d$ of maximal length in $\pp a$. 
If $M$ is not cyclic, we assume $M$ is minimally generated by $n \geq
2$ elements, and let $x_1$ and $x_2$ be part of a minimal generating
set of $M$. We may construct a homomorphism $\varphi: (f_1, \dots,
f_d) \rightarrow M$ such that $\varphi(f_i) =x_i$ for $i=1,2$. This
can be done since the $f_i$'s are in the annihilator of $M$. Let $N$ be the
push-out defined by $\varphi$; that is, $N = (M \oplus R)/ B$ where
$B$ is the submodule generated by $(\varphi(f_i), -f_i)$ for all $i =
1, \dots, d$. Then $N$ is generated at most by $n-1$ elements and
$\varphi$ induces the following exact sequence
\[ 0 \longrightarrow M \longrightarrow N \longrightarrow R/(f_1,
\dots, f_{\ell}) \longrightarrow 0, \]
which shows $[M] + [R/(f_1, \dots, f_d)] = [N]$ in $\K(C)$. 
Apply the above procedure on $N$ repeatedly until it reduces to a
cyclic module. We then obtain (\ref{Srinivas}) with some $\pp m
$-primary ideal $I$ which is the annihilator of the final cyclic
module.

Let $\mathbb G_{\bullet}^i$, $\mathbb G_{\bullet}$ and $\mathbb
M_{\bullet}$ be the resolutions of $R/I_i$, $R/I$ and $M$
respectively. By assumption, the second coefficients of the
Hilbert-Kunz function of $R$ with respect to $I_i$ and $I$ all
vanish. From the above computation of the Hilbert-Kunz functions, we
obtain $ \ch_{d-1}( \mathbb G_{\bullet}^i ) \cap c_{d-1} =0 $ and $
\ch_{d-1}( \mathbb G_{\bullet} ) \cap c_{d-1} =0$.  This implies $
\ch_{d-1}( \mathbb M_{\bullet} ) \cap c_{d-1} =0$ by (\ref{Srinivas}). The theory of
Roberts and Srinivas~\cite{RoS} states that $\K(C(R)) = \K(C)$. Since
$M$ is arbitrary in the category $C$, we have that
\[\ch_{d-1}( \mathbb F_{\bullet} ) \cap c_{d-1} =0 \]
for all $\mathbb F_{\bullet}$ in the category $C(R)$. By definition, this
means $c_{d-1} =0$ in $\overline{\A_{d-1}(R)_{\mathbb Q} }$ and completes the
proof of the implication (a) $\Rightarrow$ (b).

Next in addition to the existing conditions, assuming that $R_{\pp p}$
is Gorenstein for any  $\pp p$ in $\Min R$, the set
of all minimal prime ideals of $R$, we prove that (c) is equivalent to
(a) and (b). 

Let $\Min R = \{ \pp q_1, \dots, \pp q_{\ell} \}$ which is also the
set of all associated prime ideals of $R$ since $R$ is
Cohen-Macaulay. Then the total quotient ring $Q(R)$ is a
zero-dimensional semi-local ring. Thus $Q(R)$ is complete and is a
direct product of complete local rings. Precisely we have 
$Q(R) \simeq R_{\pp q_1} \times \cdots \times R_{\pp q_{\ell}}$. Then

\[ \begin{array}{ccl}
\omega_R \otimes_R Q(R)  
& \simeq & (\omega_R)_{\pp q_1}  \times \cdots \times (\omega_R)_{\pp
  q_{\ell}} \\  
& \simeq & \omega_{R_{\pp q_1}}  \times \cdots \times \omega_{R_{\pp
    q_{\ell}}} \\
& \simeq &  R_{\pp q_1} \times \cdots \times R_{\pp q_{\ell}} \\
& \simeq & Q(R)
\end{array}
\]
The second last isomorphism is due to the assumption that $R_{\pp
  q_i}$ is Gorenstein. 

{\em Claim: There exists an embedding from $\omega_R$ into
  $R$.} 

\noindent {\em Proof of Claim.} It is well-known that $\omega_R$ is a 
maximal Cohen-Macaulay module. Therefore, 
there is an embedding from $\omega_R$ to $\omega_R
\otimes Q(R)$ hence an embedding to $Q(R)$ since 
$\omega_R \otimes Q(R) \simeq Q(R)$. On the other hand, $R$ is a
subring of $Q(R)$ and $\omega_R$ is a finitely generated
$R$-module. Thus by clearing finitely many denominators from a
generating set of $\omega_R$ in $Q(R)$, we have an embedding from
$\omega_R$ to $R$ by multiplying with some
nonzerodivisor. This completes the proof of the Claim.

The above claim leads to a short exact sequence $0\rightarrow \omega_R
\rightarrow R \rightarrow R/\omega_R \rightarrow 0$ and thus the
equality $[R/ \omega_R]_{d-1} = [R]_{d-1} - [\omega_R]_{d-1}$ in
$\A_{d-1}(R)$. On the other hand  recall that $\td([R]) = c_d +
c_{d-1} + \cdots + c_1 + c_0 \in \A_*(R)_{\mathbb Q}$, then
 $\td( [\omega_R] )= c_d - c_{d-1} + \cdots + (-1)^d c_0$ 
 since $R$ is Cohen-Macaulay. This implies that
$\td([R/\omega_R])= 2 c_{d-1} + 2 c_{d-3} + \cdots$ since Todd
class is additive. The dimension of $R/\omega_R$ is at most $d-1$
because for any minimal prime ideal $\pp q_i$ of $R$,
$(R/\omega_R)_{\pp q_i} =0$ since $R_{\pp q_i}$ is
Gorenstein. Therefore by the top term property of the Todd class which
states, in the current terminology, that $\td _{d-1} ([R/\omega_R]) =
[R/\omega_R]_{d-1}$, we have shown that $[R/\omega_R]_{d-1} =
2c_{d-1}$.

Hence in $\overline{\A_{d-1}(R)_{\mathbb Q}}$, $c_{d-1} =
\td_{d-1}([R]) =0$ if and only if $[R/\omega_R]_{d-1} =0$ which is
equivalent to $[R]_{d-1} = [\omega_R]_{d-1}$. This proves the
equivalence of (b) and (c). And the proof of the theorem is completed.

\end{proof}

If $R$ is a domain, then $[R]_{d-1}=0$ in $\A_{d-1}(R)$ by
definition. Therefore the condition (c) in Theorem~\ref{CMcase} is
equivalent to $[\omega_R]_{d-1} =0$ in
$\overline{\A_{d-1}(R)_{\mathbb Q} }$.
But if $R$ is not a domain, then the condition (c) does not imply
$[\omega_R]_{d-1} =0 $ as shown in the following example in which we
construct a Gorenstein ring $R$ of dimension 4.
The ring $R$ satisfies $[\omega_R]_3 = [R]_3$ but it does not define a zero
class in $\overline{\A_3(R)_{\mathbb Q}}$.

We first recall the definition of the idealization of a module. Let
$(S, \pp n)$ be a local ring with the maximal ideal $\pp n$ and $N$ an
$S$-module. The {\em idealization} of $N$ is a ring, denoted $S
\ltimes N$, as an $S$-module, is equal to $S \oplus N$. 
For any two elements $(a, n)$ and $(b,m)$ in $S \oplus N$,
we define the multiplication $(a,n) (b,m) = (ab, am+bn)$. It is
straightforward to check that this multiplication gives a ring
structure on $S \oplus N$ in which $0 \oplus N$ becomes an
ideal. Moreover $S \ltimes N$ is a local ring with maximal ideal $\pp n
\oplus N$ and the ideal $0 \oplus N$ is nilpotent since by
definition $(0,m) (0,n) =(0,0)$ for any $m, n \in N$. So $\dim S
\ltimes N = \dim S$. 

\begin{example}
  Let $S = k[\{x_i: i = 1, \dots, 6 \}]/I$ where $I$ is the ideal
  generated by the maximal minors of the matrix $\begin{pmatrix} x_1 &
    x_2 & x_3 \\ x_4 & x_5 & x_6 \end{pmatrix}$. It is known that $S$
  is Cohen-Macaulay of dimension $4$ with a canonical module $\omega_S
  $ which is isomorphic to the ideal generated by $x_1$ and $x_4$ over
  $S$. Viewing $\omega_S$ as an $S$-module, we let $R$ be the
  idealization $S \ltimes \omega_S$ as defined above.  Since $R
  = S \oplus \omega_S$ is a maximal Cohen-Macaulay $S$-module, $\dim R
  =\dim S =\depth S = \depth R$. So $R$ is Cohen-Macaulay.  To verify
  that $R$ is Gorenstein, we see that $R$ is finite over $S$ and the
  canonical module $\omega_R \simeq \Hom_S(R, \omega_S) \simeq
  \Hom_S(S \oplus \omega_S , \omega_S) \simeq \Hom_S(S, \omega_S)
  \oplus \Hom_S(\omega_S, \omega_S) \simeq \omega_S \oplus S$. Such
  viewpoint of $\omega_R$ comes with a natural $R$-module structure
  and so $\omega_S \oplus S$ at the other end of the isomorphism
  sequence is also an $R$-module.  Precisely for any $a \in S$ and $y
  \in \omega_S$, the multiplications on $S \ltimes \omega_S$ by
  $(a,0)$ and $(0,y)$ induce respective multiplications on
  $\Hom_S(S\ltimes \omega_S, \omega_S)$.  One can carefully check that
  these coincide with the multiplications on $\omega_S \oplus S$ as a
  module over $R$ as an idealization of
  $\omega_S$. Hence $\omega_R \simeq R$ as $R$-modules and so
  $R$ is Gorenstein.

The ring $R$ just constructed is Gorenstein of dimension 4 but is
not a domain.
We consider the short exact sequence of $R$-modules:
\[0 \rightarrow \omega_S \rightarrow S \ltimes \omega_S \rightarrow S
\rightarrow 0 . \]
Then $[S \ltimes \omega_S]_3 = [\omega_S]_3 + [S]_3$ in $\A_3(S
\ltimes \omega_S)$. 

Furthermore since $\omega_S$ is nilpotent in $R=S \ltimes \omega_S$,
the quotient ring $R/\omega_S \simeq S$ and $R$ have exactly the same
prime ideals and rational equivalence relation. Thus $\A_*(R)$ is
isomorphic to $\A_*(S)$ and we have $[S \ltimes \omega_S]_3 =
[\omega_S]_3 + [S]_3$ holds in $\A_3(S)$. But $[S]_3 =0$ since $S$ is
a domain so $[S \ltimes \omega_S]_3 = [\omega_S]_3 = - [S/\omega_S]_3$
in $\A_3(S) = \A_3(R)$. As mentioned above $\omega_S$ is isomorphic to
the ideal generated by $x_1$ and $ x_4$.  We also know that
$\A_3(S)_{\mathbb Q}$ has dimension one and can be generated by the
class of $S/(x_1, x_4)$. This shows that $-[S/\omega_S]_3=-[S/(x_1,
x_4)]$ is nonzero in $\A_3(S)_{\mathbb Q}$.  Furthermore,
$\A_3(S)_{\mathbb Q}=\overline{ \A_3(S)_{\mathbb Q} }$ by
\cite[Example 7.9]{K16}. Hence $[R]_3 \neq 0$ in $\overline{
  \A_3(R)_{\mathbb Q} }$ as wished to show in this example.

\end{example} 


\bigskip

\section{Additive Error of the Hilbert-Kunz Function}\label{error}

This section discusses some observations that grow out of the
proof in Section~\ref{domain}.  First we define an additive error of
the Hilbert-Kunz function on a short exact sequence.  Let $ 0
\rightarrow M_1 \rightarrow M_2 \rightarrow M_3 \rightarrow 0$ be a
short exact sequence of finitely generated $R$-modules.  The
alternating sum $\hk{M_3} - \hk{M_2} + \hk{M_1} $ is called the {\em
  additive error of the Hilbert-Kunz function}. It is known from
\cite[Theorem~1.6]{Mon1}  that 
\begin{equation}\label{monskyerror}
 \hk{M_3} - \hk{M_2} + \hk{M_1} = \mathcal O(q^{d-1}) 
\end{equation}
and hence the leading coefficient $\alpha(M_i)$ is additive
\cite[Theorem~1.8]{Mon1}. We give this error a more precise approximation
in Corollary~\ref{additive}. 

We will see in Theorem~\ref{tau} below that, in order to estimate 
$\hk{M} - {\rm rank}_RM \cdot \hk{R}$, each torsion free module $M$ is 
associated with a real number $\tau([M]_{d-1})$ and that this assignment is 
compatible with the rational equivalence and hence it induces a group 
homomorphism from $\A_{d-1}(R)$ to $\mathbb R$. Then we deduce that 
$\tau$ is additive on short  exact sequences and use this fact to estimate the 
additive error of the Hilbert-Kunz function. 

\begin{theorem}\label{tau} 
Let $R$ be an excellent local domain regular in 
codimension one of characteristic p.  Assume that the residue field of $R$ is 
perfect and $\dim R = d$.  Then  
there exists a group homomorphism 
$\tau: \A_{d-1}(R) \rightarrow \mathbb R$ such that for a torsion free 
$R$-module $M$ of rank $r$,  
$\hk M = r \hk R + \tau( [M]_{d-1} ) q^{d-1} + \mathcal O(q^{d-2})$. 
\end{theorem}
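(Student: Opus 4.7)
The plan is to define $\tau$ via a two-stage construction. For a torsion-free $R$-module $M$ of rank $r$, Theorem~\ref{thm.HK} gives $\hk{M} = \alpha(M)\, q^d + \beta(M)\, q^{d-1} + \mathcal{O}(q^{d-2})$. The inclusion $M \hookrightarrow R^r$ (afforded by localizing at the generic point) has cokernel $C$ of dimension at most $d-1$; additivity of $\alpha$ on short exact sequences, together with $\alpha(C) = 0$, forces $\alpha(M) = r\alpha(R)$. Setting $\tilde\tau(M) := \beta(M) - r\beta(R)$, one obtains $\hk{M} = r\hk{R} + \tilde\tau(M)\, q^{d-1} + \mathcal{O}(q^{d-2})$. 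The theorem then reduces to (a) showing that $\tilde\tau(M)$ depends only on the class $[M]_{d-1} \in \A_{d-1}(R)$, and (b) extending the induced function to a group homomorphism $\A_{d-1}(R) \to \mathbb{R}$.

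For (a), I argue in two substeps. First, if $0 \to M_1 \to M_2 \to T \to 0$ is exact with $M_1, M_2$ torsion-free and $\dim T \le d-2$, then $\tilde\tau(M_1) = \tilde\tau(M_2)$: Theorem~\ref{Monsky} gives $\hk{T} = \mathcal{O}(q^{d-2})$, the Tor estimate invoked in the proof of Lemma~\ref{codim2} yields $\ell(\tor_1^R(T, R/I_n)) = \mathcal{O}(q^{d-2})$, and consequently $\hk{M_2} = \hk{M_1} + \mathcal{O}(q^{d-2})$; Theorem~\ref{thmChan} also gives $[M_1]_{d-1} = [M_2]_{d-1}$. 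Second, if $M, N$ are torsion-free of rank $r$ with $[M]_{d-1} = [N]_{d-1}$, my plan is to pass to the normalization $\overline R$, a normal local domain finite over $R$ (by excellence) with cokernel of dimension $\le d-2$ (by the (R1$'$) hypothesis). Set $\overline M := (\overline R \otimes_R M)/\text{torsion}$; the torsion has dimension $\le d-2$, so by the formula appearing in the proof of Theorem~\ref{thm.HK}, $\hkr{R,I}{M} = g\, \hkr{\overline R,\, I\overline R}{\overline M} + \mathcal{O}(q^{d-2})$, where $g = [\kappa(\overline R):\kappa(R)]$. Comparing $q^{d-1}$ coefficients and using the same formula for $\overline R$ itself, one obtains $\tilde\tau^R(M) = g\, \tilde\tau^{\overline R}(\overline M)$. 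Since the equality $[M]_{d-1} = [N]_{d-1}$ transfers to $[\overline M]_{d-1} = [\overline N]_{d-1}$ in $\A_{d-1}(\overline R)$ via the isomorphism of~(\ref{d-1chow}), \cite[Corollary~1.10]{HMM} applied over the normal ring $\overline R$ yields $\tilde\tau^{\overline R}(\overline M) = \tilde\tau^{\overline R}(\overline N)$, and hence $\tilde\tau^R(M) = \tilde\tau^R(N)$.

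Once (a) is established, define $\tau([M]_{d-1}) := \tilde\tau(M)$ for any torsion-free $M$. For (b), $\A_{d-1}(R)$ is generated by classes $[R/\pp q]$ with $\dim R/\pp q = d-1$; since $R$ is a domain, the sequence $0 \to \pp q \to R \to R/\pp q \to 0$ combined with Theorem~\ref{thmChan}(b) gives $[R/\pp q] = -[\pp q]_{d-1}$, and $\pp q$ is torsion-free of rank one, so $\tau$ is defined on generators. Integer-linear combinations are realized by direct sums of such ideals, and additivity of $\tilde\tau$ on direct sums (immediate from the definition) provides the required linearity. Compatibility with rational equivalence $\divs((0), x) = [R/xR]_{d-1}$ for $x \ne 0$ follows from the first substep of (a) applied to $0 \to xR \to R \to R/xR \to 0$ together with the $R$-module isomorphism $xR \cong R$ (multiplication by $x$), which forces $\tilde\tau(xR) = \tilde\tau(R)$.

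The main obstacle is the second substep of (a): comparing $\tilde\tau$ for torsion-free $M, N$ with equal $\A_{d-1}$-class. The normalization route reduces the problem to \cite[Corollary~1.10]{HMM}, but requires careful tracking of the torsion introduced by $\otimes_R \overline R$, of the scaling factor $g = [\kappa(\overline R):\kappa(R)]$, and of the identification of cycle classes on $R$ with those on $\overline R$ under the (R1$'$) hypothesis. An alternative, self-contained approach is the one pursued in Section~\ref{domain}, where the entire \cite{HMM} analysis is carried out directly in the (R1$'$) setting without invoking the normalization.
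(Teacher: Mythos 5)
Your overall strategy---reduce to the normalization and invoke \cite[Corollary~1.10]{HMM} there---is the same as the paper's, and the skeleton (additivity of $\alpha$, the quantity $\beta(M)-r\beta(R)$, transfer of cycle classes through the isomorphism $\A_{d-1}(R)\simeq\A_{d-1}(\overline R)$) matches the paper's proof. Two points in your write-up are genuinely imprecise, though, and the first would break a literal execution of your step (a), substep 2. The normalization $\overline R$ of an excellent local domain satisfying (R1$'$) need not be \emph{local}: the fiber over $\pp m$ sits in codimension $d\geq 2$, so $\overline R$ can be semi-local with several maximal ideals $\pp m_1,\dots,\pp m_s$ (e.g., the local ring of a normal surface with two points glued). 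Consequently your formula $\hkr{R,I}{M}=g\,\hkr{\overline R, I\overline R}{\overline M}+\mathcal O(q^{d-2})$ with a single degree $g=[\kappa(\overline R):\kappa(R)]$ does not parse in general; the paper instead writes $\hkr{R,I}{M}=\sum_{i=1}^s g_i\,\hkr{R_i,IR_i}{M\otimes R_i}+\mathcal O(q^{d-2})$, localizing at each $\pp m_i$ and moreover completing to get $R_i=\widehat{\overline R_{\pp m_i}}$, which is $F$-finite---the hypothesis under which \cite[Corollary~1.10]{HMM} is applied. Your argument goes through once you make these replacements (the Chow-group identification then runs through $\A_{d-1}(R)\simeq\A_{d-1}(\overline R)\to\prod_i\A_{d-1}(R_i)$, using the completion map of \cite[Definition~2.2]{KaK1}), but as written the local/$F$-finite reductions are missing.

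The second issue is in your step (b). The sequence $0\to xR\to R\to R/xR\to 0$ has $\dim R/xR=d-1$, so your ``first substep of (a)'' (which requires the cokernel to have dimension at most $d-2$) does not apply to it, and the conclusion $\tilde\tau(xR)=\tilde\tau(R)$ coming from $xR\cong R$ says nothing by itself about $\sum_{\pp p}\ell(R_{\pp p}/xR_{\pp p})\,\tau([R/\pp p])$ being zero. In fact, once substep 2 of (a) is done properly, compatibility with $\divs((0),x)$ is already built in (the target of your reduction, $\cl(\overline R)\simeq\A_{d-1}(\overline R)$, is defined modulo rational equivalence), so this separate verification is both misapplied and redundant. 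Relatedly, extending from classes of torsion-free modules to arbitrary elements $\sum n_i[R/\pp p_i]$ with mixed-sign $n_i$ requires writing the class as a difference $[M]_{d-1}-[N]_{d-1}$ and checking well-definedness after padding by free summands to equalize ranks; this is routine given (a) and additivity on direct sums, but you should say it. By contrast, the paper sidesteps all of (b) by \emph{defining} $\tau$ as the composite $(\sum_i g_i\tau_i)\circ\gamma\circ\delta^{-1}$ of maps already known to be group homomorphisms, and then verifying the asserted formula for torsion-free $M$; that top-down construction is cleaner than building the map on generators.
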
 

\begin{proof}
  Take the integral closure  $\overline R$ of $R$ in its quotient
  field. Let $\pp m_1, \dots, \pp m_s$ be the maximal ideals of
  $\overline{R}$. Let $g_i = [\overline R/\pp m_i: R/\pp m]$ be the degree of
  the field extensions. The completion of $\overline R$ is again normal
  and it is a product of the completions of $\overline{R}_{\pp m_i}$;
  denote $\widehat {\overline R} = R_1 \times \cdots \times R_s$ with
  $R_i = \widehat {\overline{R}_{\pp m_i} }$.  Then the following
  equalities of Hilbert-Kunz functions hold:
\begin{equation}\label{hkcomp}
 \begin{array}{ccl}
\hkr{R,I}{M} & = & \hkr{R,I}{M \otimes_R \overline{R} } +\mathcal
O(q^{d-2} )\\
 & = & \sum_{i=1}^s g_i \hkr{\overline{R}_{\pp m_i}, I \overline{R}_{\pp m_i}} {M
  \otimes_R\overline R_{\pp m_i} } +\mathcal O(q^{d-2}) \\
 &  = & \sum_{i=1}^s g_i \hkr{R_i, IR_i}{M  \otimes_R R_i}  +\mathcal O(q^{d-2}).
\end{array} 
\end{equation}
The first equality holds because of the exact
  sequence $ 0 \rightarrow K \rightarrow M \rightarrow M\otimes
  \overline R \rightarrow C \rightarrow 0$ obtained by tensoring $M$
  to the inclusion $R \hookrightarrow \overline R$ for some $K$ and
  $C$. 
Since $R$ and $\overline R$ are isomorphic to each other
  when localized at prime ideals of dimension $\geq d-1$, we know that $K$
  and $C$ have dimension at most $d-2$.  Then we apply
  Lemma~\ref{codim2} to establish the equality.  

Notice that the residue field $\kappa(R_i)$ of $R_i$ is a finite field
extension of that of $R/\pp m$, so $g_i < \infty$. 
Because $R_i$ is complete with perfect residue field,
$R_i$ is $F$-finite. 

Let $M$ be a torsion free module of rank $r$.  Then $M\otimes_R R_i$
has rank $r$ and its torsion submodule is of dimension at most $d-2$.
Let $T(M\otimes_R R_i) $ denote the torsion submodule of $M\otimes_R
R_i$. Hence $\tau( [M\otimes R_i/T (M\otimes R_i)]_{d-1} )$ exists in
the sense of \cite[Corollary~1.10]{HMM} since $R_i$ is $F$-finite for
each $i$. Continuing the computation in (\ref{hkcomp}), we see that
for each $n$
\begin{equation}\label{hkcomp2}
\begin{array}{cl}
 & \hkr{R,I} M - r\hkr{R,I} R \\  = & \sum_{i=1}^s g_i (\hkr{R_i,
  IR_i}{M\otimes R_i} - r \hkr{R_i,I}{R_i} ) +\mathcal O(q^{d-2}) \\
  = & \sum_{i=1}^s g_i \tau( [M\otimes R_i/ T(M\otimes R_i)]_{d-1} ) q^{d-1} +\mathcal O(q^{d-2} ).
\end{array} 
\end{equation}

Next we consider the following composition of maps on
the $(d-1)$-components of the Chow groups 
\[ \A_{d-1}(R) \stackrel{\delta^{-1}}{\longrightarrow} \A_{d-1}(\overline
R) \stackrel{\gamma}{\longrightarrow} \A_{d-1}(\hat{ \overline R}) =  
\A_{d-1}(R_1) \times \cdots \times \A_{d-1}(R_s)  
\longrightarrow \mathbb R . 
\]
The first map $\delta^{-1}$ is the inverse of the
isomorphism $\delta: \A_{d-1}(\overline R) \rightarrow \A_{d-1}(R)$
induced by $R \hookrightarrow \overline{R}$ since  $R$
satisfies (R1). The second map $\gamma$ is as defined in
\cite[Definition~2.2]{KaK1} for the completion is a flat map.
Each $R_i$ is a complete normal local ring so there exists a map
$\tau_i: \A_{d-1}(R_i) \rightarrow \mathbb R$ by
\cite[Corollary~1.10]{HMM}. Now the desired map $\tau$ from $\A_{d-1}(R)$ to
$\mathbb R$ can be obtained by taking appropriate composition of
maps: $(\sum_{i=1}^s g_i \tau_i) \circ \gamma \circ \delta^{-1}$.
Here note that
$ \gamma \circ \delta^{-1} ( [M]_{d-1} ) = 
( [ {M \otimes R_1}/{\tau( M\otimes R_1) } ]_{d-1}, \dots, 
   [ {M \otimes R_s}/{\tau( M\otimes R_s) } ]_{d-1}  ) . 
$
According to the computation in (\ref{hkcomp2}), for any torsion
free module $M$ of finite rank, $\tau([M]_{d-1}) = \sum_i g_i \tau_i( [M
\otimes R_i / T(M\otimes R_i) ]_{d-1} )$.
\end{proof}

\begin{remark}\label{add_remark} 
  Assume that $R$ is as in Theorem~\ref{tau}. In the rest of this
  paper, for an $R$-module $M$, we denote $\tau(
  [M]_{d-1} )$ simply by $\tau(M)$. \\
  $(a)$ The map $\tau$ is additive on a short exact sequence. This is
  an immediate consequence
  of Theorem~\ref{thmChan}(b) and Theorem~\ref{tau}. \\
  $(b)$ For an arbitrary module $M$ of rank $r$, let $T$ be the
  torsion submodule of $M$ and $M'$ be the quotient $M/T$. Then
\begin{equation}\label{HKformula}
\hk M = r\hk R + (r\beta(R) + \tau(  M' ) ) q^{d-1} + \hk T+ \mathcal (q^{d-2}).
\end{equation}
This can be done by taking normalization as done in the proof of Theorem~\ref{tau}
and applying \cite[Lemma~1.5]{HMM}. (Or see a straightforward argument in 
Theorem~\ref{thm:HKfunc} of the next section.)

\end{remark}

\begin{corollary}\label{additive} Assume that $R$ is as in
  Theorem~\ref{tau}. Let $ 0 \rightarrow M_1 \rightarrow M_2
  \rightarrow M_3 \rightarrow 0$ be a short exact sequence of finitely
  generated $R$-modules. For each $i$ let $T_i$ be the torsion
  submodule of $M_i$ and $M_i'$ the torsion free module $M_i/T_i$.  Then
  \[ \hk{M_3} - \hk{M_2} + \hk{M_1} = \hk{T_3/ \delta_2(T_2)} -
  \tau( T_3/\delta_2(T_2) ) q^{d-1} + \mathcal O (q^{d-2}) \] where 
    $\delta_2$ is the induced map from $T_2$ to $T_3$.
\end{corollary}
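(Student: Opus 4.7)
The plan is to combine the formula of Remark~\ref{add_remark}(b) with the cycle-class additivity of Theorem~\ref{thmChan}(b) to reduce the additive error to a torsion contribution, and then to handle the torsion piece via additivity of the leading Hilbert-Kunz coefficient of $(\leq d-1)$-dimensional modules. Applying (\ref{HKformula}) to each $M_i$ gives
\[
\hk{M_i} = r_i \hk R + \bigl(r_i \beta(R) + \tau(M_i')\bigr) q^{d-1} + \hk{T_i} + \mathcal{O}(q^{d-2}),
\]
and since $R$ is a domain, localizing the short exact sequence at $(0)$ forces $r_1 - r_2 + r_3 = 0$. Thus the $r_i \hk R$ and $r_i \beta(R) q^{d-1}$ contributions cancel in the alternating sum, leaving
\[
\hk{M_3} - \hk{M_2} + \hk{M_1} = \bigl(\tau(M_3') - \tau(M_2') + \tau(M_1')\bigr) q^{d-1} + \bigl(\hk{T_3} - \hk{T_2} + \hk{T_1}\bigr) + \mathcal{O}(q^{d-2}).
\]

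To evaluate the $\tau$-piece, I would first restrict the original sequence to torsion submodules. Left-exactness at $T_1$ comes from $T_1 = M_1 \cap T_2$, and exactness at $T_2$ is a short diagram chase showing $\ker(T_2 \to T_3) = T_1$, producing the four-term exact sequence
\[
0 \to T_1 \to T_2 \to T_3 \to T_3/\delta_2(T_2) \to 0.
\]
Splitting it into $0 \to T_1 \to T_2 \to \delta_2(T_2) \to 0$ and $0 \to \delta_2(T_2) \to T_3 \to T_3/\delta_2(T_2) \to 0$ and applying Theorem~\ref{thmChan}(b) in $\A_{d-1}(R)$ twice gives $[T_3/\delta_2(T_2)]_{d-1} = [T_3]_{d-1} - [T_2]_{d-1} + [T_1]_{d-1}$. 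Combining this with $[M_i]_{d-1} = [T_i]_{d-1} + [M_i']_{d-1}$ and cycle-class additivity on the original sequence yields $\tau(M_3') - \tau(M_2') + \tau(M_1') = -\tau(T_3/\delta_2(T_2))$.

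For the torsion Hilbert-Kunz piece, Theorem~\ref{thm.HK} yields $\hk{T_i} = \beta(T_i) q^{d-1} + \mathcal{O}(q^{d-2})$ and $\hk{T_3/\delta_2(T_2)} = \beta(T_3/\delta_2(T_2)) q^{d-1} + \mathcal{O}(q^{d-2})$, since all these modules have dimension at most $d-1$. So modulo $\mathcal{O}(q^{d-2})$, the required identity $\hk{T_3} - \hk{T_2} + \hk{T_1} = \hk{T_3/\delta_2(T_2)} + \mathcal{O}(q^{d-2})$ reduces to $\beta(T_3) - \beta(T_2) + \beta(T_1) = \beta(T_3/\delta_2(T_2))$, which by the same splitting of the four-term sequence follows from additivity of the Hilbert-Kunz multiplicity on short exact sequences of $(\leq d-1)$-dimensional modules. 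This last additivity is a standard consequence of the associativity formula $\beta(T) = \sum_{\height \pp p = 1} \ell(T_\pp p)\, e_{HK}(R/\pp p,\, IR/\pp p)$ together with exactness of localization at height-one primes. Substituting then produces the claimed identity. The main obstacle is this torsion step: a direct Tor-sequence analysis on each splitting only bounds the alternating sum by $\mathcal{O}(q^{d-1})$, so one must compute the $q^{d-1}$ coefficient prime-by-prime to exhibit the required cancellation.
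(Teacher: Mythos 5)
Your proposal is correct, and its overall architecture --- apply (\ref{HKformula}) to each $M_i$, cancel the free contributions by rank additivity, and reduce the remaining error to the module $T_3/\delta_2(T_2)$ --- is the same as the paper's; the two sub-steps are handled by somewhat different means, both sound. For the $\tau$-identity, the paper works with $K=\ker(M_2'\to M_3')$ and the isomorphism $T_3/\delta_2(T_2)\cong K/\operatorname{Im}f_1$, whereas you apply Theorem~\ref{thmChan}(b) to the original sequence, to each $0\to T_i\to M_i\to M_i'\to 0$, and to the split four-term torsion sequence $0\to T_1\to T_2\to T_3\to T_3/\delta_2(T_2)\to 0$; both routes give $\tau(M_3')-\tau(M_2')+\tau(M_1')=-\tau(T_3/\delta_2(T_2))$. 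For the torsion piece, the paper tensors the two short exact sequences with $R/I_n$, identifies the correction terms $\ell(H_1)$, $\ell(H_2)$ with additive errors of Hilbert--Kunz functions computed over the ring $S=R/\annih(T_2)$ of dimension at most $d-1$, and invokes Monsky's bound \cite[Theorem~1.6]{Mon1} over $S$ to get $\mathcal O(q^{\dim T_2-1})=\mathcal O(q^{d-2})$; you instead extract the $q^{d-1}$-coefficient via the associativity formula for the Hilbert--Kunz multiplicity and exactness of localization at the $(d-1)$-dimensional primes (which, since an excellent local domain is catenary, are exactly the height-one primes). Both arguments ultimately rest on Monsky's additivity results, so the difference is one of packaging; your version makes the cancellation visible prime by prime, while the paper's avoids any localization. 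One small correction: your closing remark that a direct Tor-sequence analysis ``only bounds the alternating sum by $\mathcal O(q^{d-1})$'' undersells the paper's device --- the point is precisely that the Tor error is an additive error of Hilbert--Kunz functions over the lower-dimensional ring $S$, so Monsky's theorem applied over $S$ already yields $\mathcal O(q^{d-2})$ without any coefficient computation.
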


\begin{proof} 
For each $i$, letting $r_i$ be the rank of $M_i$, we have 
\begin{equation*}
\hk{M_i} = r_i \alpha(R) q^d + (r_i \beta(R) + \tau(M_i') ) q^{d-1}  + \hk{T_i} + \mathcal O(q^{d-2}) 
\end{equation*} 
by (\ref{HKformula}). Therefore
\[ \begin{array}{cl}
    &  \hk{M_3} -\hk{M_2} + \hk{M_1} \\ 
= & 
( \hk{T_3} - \hk{T_2} + \hk{T_1} ) + 
(\tau(M_3' ) -  \tau(M_2') +
\tau(M_1') ) q^{d-1} + \mathcal O (q^{d-2}) .
\end{array}
\]

Next we inspect  $\hk{T_i}$ and 
the alternating sum of $\tau(M_i)$ in order to estimate the additive
error of the Hilbert-Kunz function on a short exact sequence. 

The submodules $T_i$ and $M_i'$ give the following exact
sequences:
\[ \begin{array}{rcl}
  0 \longrightarrow T_1 \stackrel{\delta_1}{\longrightarrow}  & T_2 & 
  \stackrel{\delta_2}{\longrightarrow} T_3  \\
  0 \longrightarrow M_1'  \stackrel{f_1}{\longrightarrow} & M_2' &  \\
  & M_2' & \stackrel{f_2}{\rightarrow} M_3' \rightarrow 0 .
\end{array}
\]
The kernel of the map from $M_2'$ to $M_3'$ contains $f_1(M_1')$ but
is not necessarily equal. Therefore $T_i$ and $M_i'$ do not form their
own short exact sequences. By the standard diagram chasing, one sees
$T_3/ \delta_2(T_2) \cong \operatorname{Ker} f_2 / \operatorname{Im}
f_1$.

For the simplicity of notation, we identify $T_1$ with its image in $T_2$ via $\delta_1$. 
The above exact sequence of $T_i$ induces an injection from $T_2/T_1$
to $T_3$. Note that
$ 0 \longrightarrow T_2/ T_1 \longrightarrow T_3 \longrightarrow T_3/\delta_2(T_2)
\longrightarrow 0 $
is exact. By tensoring with $R/I_n$, we have the following exact sequences
\[ \cdots \longrightarrow \tor_1(T_2/ T_1, R/I_n) \stackrel{\eta_1}{
  \longrightarrow} T_1 \otimes R/I_n \longrightarrow T_2 \otimes R/I_n
\longrightarrow T_2/ T_1 \otimes R/I_n \longrightarrow 0 \]

\[ \cdots \longrightarrow \tor_1( T_3/\delta_2(T_2), R/I_n)
\stackrel{\eta_2}{\longrightarrow} T_2/T_1 \otimes R/I_n
\longrightarrow T_3 \otimes R/I_n \longrightarrow T_3/\delta_2(T_2) \otimes R/I_n
\longrightarrow 0 .\]
This implies
\[ \hk{T_2/T_1} - \hk{T_2} + \hk{T_1} - \ell(H_1) =0 \]
and
\[  \hk{ T_3/\delta_2(T_2) } - \hk{T_3} + \hk{ T_2/T_1} - \ell(H_2) =0 \]
where $H_1$ and $H_2$ are the images of $\eta_1$ and $\eta_2$
respectively. Thus
\[ \begin{array}{rcl} 
 \hk{T_3} - \hk{T_2} + \hk{T_1} 
& = & \hk{ T_3/\delta_2(T_2) } + \hk{T_2/T_1} - \ell(H_2) - \hk{T_2/T_1} + \ell(H_1)  \\
& = & \hk{ T_3/\delta_2(T_2) } + \ell(H_1) -\ell(H_2). 
\end{array} 
\]

Let $S$ be the quotient of $R$
modulo the annihilator of $T_2$. Then $\dim S= \dim T_2 \leq d-1$. 
It is straightforward to see that $\hkr{S}{T_2} = \hkr{R}{T_2}$. This
implies, by (\ref{monskyerror}),
\[\begin{array}{ccl}
 \ell(H_1) & = & \hk{T_2/T_1} - \hk{T_2} + \hk{T_1} \\
       & = & \hkr{S}{T_2/T_1} - \hkr{S}{T_2} + \hkr{S}{T_1}  \\
       & = & \mathcal O(q^{\dim T_2 -1}). 
\end{array}
\]
Similarly we have $\ell(H_2) = \mathcal O(q^{\dim T_2 -1})$. Hence 
\[ \hk{T_3} - \hk{T_2} + \hk{T_1} = \hk{T_3/\delta_2(T_2)} + \mathcal O
(q^{ \dim T_2 -1})  . \]

For the torsion free part, let $K$ be the module such that 
$ 0 \longrightarrow K \longrightarrow M_2' \stackrel{f_2}{\longrightarrow} M_3'
\longrightarrow 0 $
is exact. By Theorem~\ref{thmChan}(b), $[M_2']_{d-1} - [M_3']_{d-1} =
[K]_{d-1}$.  Then 
\[ [M_3']_{d-1} - [M_2']_{d-1} + [M_1']_{d-1} = - [K]_{d-1} + [
M_1']_{d-1} =- [T_3/\delta_2(T_2)]_{d-1} .\] 
The second last equality holds because
$T_3/\delta_2(T_2) \cong \operatorname{Ker} f_2/ \operatorname{Im} f_1$, and
$\operatorname{Ker} f_2 = K$ and $ \operatorname{Im} f_1 \cong
M_1'$. Theorem~\ref{tau} shows that $\tau$ is a group homomorphism so
we obtain
\[ \begin{array}{rcl}
\tau(M_3') - \tau(M_2') +\tau( M_1') & = & 
\tau([M_3']_{d-1}) - \tau([M_2']_{d-1}) +\tau( [ M_1'] _{d-1} ) \\
& = & \tau( [M_1']_{d-1}  - [K]_{d-1} )  \\ 
& = & - \tau( [T_3/\delta_2(T_2)]_{d-1})  =- \tau( T_3/\delta_2(T_2)  )
\end{array} \]

It is clear now that 
\[ \begin{array}{cl}
    &  \hk{M_3} - \hk{M_2} + \hk{M_1} \\ 
= &
( \hk{T_3} - \hk{T_2} + \hk{T_1} ) + 
(\tau(M_3 ) -  \tau(M_2') +
\tau(M_1') ) q^{d-1} + \mathcal O (q^{d-2}) \\
= & \hk {T_3/\delta_2(T_2)} - \tau( T_3/\delta_2(T_2)  ) q^{d-1} + \mathcal O(q^{d-2}).
\end{array}
\] 
\end{proof}

As an immediate corollary, if $R$ is an integral domain satisfying the
assumption in Corollary~\ref{additive} and $\A_{d-1}(R)=0$, then the additive error of the
Hilbert-Kunz function is measured by 
$\hk{T_3/\delta_2(T_2) }$ up to $\mathcal O(q^{d-2})$.

\begin{example} 
  Let $R = k[x_1, \dots, x_6] / I_2$ where $I_2$ indicates the ideals
  generated by the $2 \times 2$ minors of the matrix $ \begin{pmatrix}
    x_1 & x_2 &x_3 \\ x_4 &x_5 & x_6 \end{pmatrix}$. The Hilbert-Kunz
  function of $R$ is $\hk R= \frac{1}{8} (13 q^4 -2 q^3 - q^2 -2q)$
  computed by K.-i. Watanabe. We consider the exact sequence
  \[ 0 \rightarrow (x_1, x_2, x_3) \rightarrow R \rightarrow R/(x_1,
  x_2, x_3) \rightarrow 0.\] It is known that $\omega_R=(x_1, x_4)$ is
  its canonical module and $[R/(x_1, x_4)]$ and $-[R/(x_1,x_2,x_3)]$
  are rationally equivalent in $A_3(R)$. Thus
  $\tau([R/(x_1,x_2,x_3)])=\tau(\omega_R) = \frac{1}{2}$ since $
  -\frac{1}{2} \tau(\omega_R)=\beta (R) $ by Kurano~\cite{K5}. By
Corollary~\ref{additive}, the additive error of the above short exact
sequence is 
\[\begin{array}{ccl}
&  & \hk{ T_3/ \delta_2(T_2) } - \tau ( T_3/ \delta_2(T_2) )  q^2+
\mathcal O ( q^2 ) \\
& = & \hk {R/(x_1,x_2,x_3) } - \tau( R/(x_1,x_2,x_3)  ) q^3+
\mathcal O(q^{2})  \\ & = &
 q^3 - \frac{1}{2} q^3 + \mathcal O(q^2) \\
&  = & \frac{1}{2} q^3 + \mathcal O(q^2).
\end{array} \]

\end{example}


\bigskip


\section{Appendix: $F$-finite Integral Domain with (R1$'$) } \label{domain}

We have seen, in Section~\ref{general}, the existence of the second
coefficient of the Hilbert-Kunz function. In the current section, we
revisit the proof of Huneke, McDermott and Monsky~\cite{HMM}, in which
the second coefficient is analyzed in details. Each lemma in
\cite{HMM} is a special case of its own interest in terms of  the
result and the proof.

The proof in \cite{HMM} utilizes the divisor class group and a key
lemma that states that if $T$ is a finitely generated torsion module
over a local ring $(R, \pp m)$ of characteristic $p$, then $\ell(
\tor_1^R(T, R/I_n)) = O(q^s)$ where $I$ is an $\pp m$-primary ideal
and $s=\dim T$ ({\em c.f.} \cite[Lemma~1.1]{HMM}). Note that this key lemma holds
for general local rings of positive characteristic. When a ring is no
longer normal, the Chow group is often a natural extension of the
divisor class group. Also the corresponding divisor class is 
additive on short exact sequences in the Chow group (see
Theorem~\ref{thmChan}) and this fact is used in proving several lemmas
in \cite{HMM}.

Hence the purpose of this section is twofold. We
present a generalized proof of \cite{HMM} for an integral domain under
the assumption that the domain is $F$-finite and regular in
codimension one. To do so, the Chow group is utilized in place of the
divisor class group. Thus our purpose is also to demonstrate how rational 
equivalence can be applied in this study. Although the proof presented here 
follows a similar structure as \cite{HMM}, the argument is a nontrivial extension. 
The limitation of this argument is that the authors do not know if it is possible to
extend each lemma for a ring that is not necessarily an integral domain. Hence
the main result Theorem~\ref{thm:HKfunc} requires stronger assumption on the 
ring than Theorem~\ref{thm.HK}.  

The idea of the proof goes as follows: we first focus on torsion free
modules and prove that their Hilbert-Kunz function is different from
that of free modules of the same rank by a function in form of $\tau
q^{d-1} + \mathcal O(q^{d-2})$. The constant $\tau$ depends on the
cycle class of the module $M$ and is zero if $M$ defines a zero class.
(Note that this constant $\tau$ is the same as $\tau(M)$ discussed in
Remark~\ref{add_remark}.) Then for an arbitrary module $M$, we put
together the information for the functions of the torsion submodule
$T$ and the torsion free module $M/T$ to obtain $\hk M$.  We remind
the readers to recall the definition of rational equivalence and cycle
classes from Section~\ref{chow}.

For the remaining of the paper, we assume that $R$ is a local integral
domain with perfect residue class field such that the Frobenius map
$f: R \rightarrow R$ is finite and $R$ is regular in codimension one
as defined in the introduction.  By Kunz's theorem~\cite{Kun},
$F$-finiteness implies that $R$ is excellent hence the integral
closure $\overline R$ is finite over $R$.

\begin{lemma}\label{zeroclass}
  Let $R$ be an $F$-finite local domain regular in codimension one
  with perfect residue class field.  Let $J$ be a nonzero ideal in
  $R$. Assume $J$ defines the zero class in $\A_{d-1}(R)$; {\em i.e.},
  $[J]_{d-1} =0$. Then $\hk J = \hk R + \mathcal O(q^{d-2})$.
\end{lemma}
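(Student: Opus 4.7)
The plan is to reduce the problem to comparing $J$ with a principal ideal up to a submodule of codimension at least two, and then to appeal to Lemma~\ref{codim2}. More precisely, I will produce nonzero elements $a, b \in R$ such that the submodules $aR$ and $bJ$ of $R$ coincide at every prime of dimension $\geq d-1$; since $aR \cong R$ and $bJ \cong J$ as $R$-modules, the desired estimate $\hk{J} = \hk{R} + \mathcal{O}(q^{d-2})$ will follow.

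The first step is to pin down the cycle $[J]_{d-1}$ concretely. Since $R$ is a domain, $[R]_{d-1} = 0$ in $\A_{d-1}(R)$, and applying Theorem~\ref{thmChan}(b) to the short exact sequence $0 \to J \to R \to R/J \to 0$ yields $[J]_{d-1} = -[R/J]_{d-1}$ in $\A_{d-1}(R)$. At each prime $\pp p$ of dimension $d-1$ containing $J$, the ring $R_{\pp p}$ is a DVR by (R1$'$), so $(R/J)_{\pp p} \cong R_{\pp p}/\pi_{\pp p}^{e_{\pp p}} R_{\pp p}$ has length $e_{\pp p} := v_{\pp p}(J)$. The standard identification of the top-dimensional cycle class of a module with the sum of its minimal primes weighted by length then gives $[R/J]_{d-1} = \sum_{\pp p} e_{\pp p} [R/\pp p]$ in $\A_{d-1}(R)$, with the sum ranging over dimension $d-1$ primes containing $J$.

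Next, because $R$ is a domain, the only prime of dimension $d$ is $(0)$, so ${\rm Rat}_{d-1}(R)$ coincides with the image of the divisor map $K^* \to Z_{d-1}(R)$, $f \mapsto \divs(f) := \sum_{\pp p} v_{\pp p}(f) [R/\pp p]$, where $K$ is the fraction field of $R$. The hypothesis $[J]_{d-1} = 0$ therefore produces some $f \in K^*$ with $\sum_{\pp p} e_{\pp p} [R/\pp p] = \divs(f)$, and in particular $v_{\pp p}(f) = e_{\pp p}$ at every dimension $d-1$ prime. Writing $f = a/b$ with $a, b \in R \setminus \{0\}$, a direct local check shows that $(bJ)_{\pp p} = \pi_{\pp p}^{v_{\pp p}(b) + e_{\pp p}} R_{\pp p} = \pi_{\pp p}^{v_{\pp p}(a)} R_{\pp p} = (aR)_{\pp p}$ at every dimension $d-1$ prime, and the two modules also coincide at the generic point.

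Finally, I set $N := aR \cap bJ$ inside $R$. By the local agreement just established, both $aR/N$ and $bJ/N$ vanish after localization at every prime of dimension $\geq d-1$, so each has Krull dimension at most $d-2$. Applying Lemma~\ref{codim2} to the short exact sequences $0 \to N \to aR \to aR/N \to 0$ and $0 \to N \to bJ \to bJ/N \to 0$ then gives $\hk{aR} = \hk{N} + \mathcal{O}(q^{d-2}) = \hk{bJ} + \mathcal{O}(q^{d-2})$. Since $a$ and $b$ are nonzerodivisors in the domain $R$, multiplication yields isomorphisms $R \cong aR$ and $J \cong bJ$, hence $\hk{R} = \hk{aR}$ and $\hk{J} = \hk{bJ}$, which gives the conclusion. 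The main obstacle I anticipate lies in step two: without normality one cannot directly appeal to a divisor class group, and the chain of identifications through $R/J$, (R1$'$), and the DVR structure at height one must be carried out carefully to ensure that the resulting divisor $\divs(f)$ really matches the cycle arising from $[J]_{d-1}$.
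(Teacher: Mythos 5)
Your proof is correct and follows essentially the same route as the paper's: both translate $[J]_{d-1}=0$ into the existence of nonzero $a,b\in R$ with $aR$ and $bJ$ (resp.\ $aJ$ and $bR$ in the paper) agreeing at all primes of dimension $\geq d-1$, and then compare the two via a common module of codimension-$\leq d-2$ difference using Lemma~\ref{codim2} together with the isomorphisms $aR\cong R$, $bJ\cong J$. The only cosmetic differences are that you pass through $[R/J]_{d-1}$ to identify the cycle and use the intersection $aR\cap bJ$ as the comparison module, whereas the paper works with the divisor cycles directly and uses the common height-one primary components.
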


\begin{proof} 
  By assumption $[J]_{d-1} =0$, there exist nonzero elements $a$ and
  $b$ in $R$ such that $[J]_{d-1} = \divs(\pp o; b) - \divs(\pp o; a)$
  where $\pp o$ denotes the zero ideal. By definition of divisors, this
  implies
\begin{equation}\label{eq:length}
 \ell(R_{\pp p}/ J R_{\pp p}) + \ell (R_{\pp p}/ a R_{\pp p}) =
\ell( R_{\pp p}/b R_{\pp p} ) 
\end{equation}
for any prime ideal $\pp p$ of dimension $d-1$.  Since $R$ is regular
in codimension one, $R_{\pp p}$ is a DVR and every ideal is a power of
the maximal ideal. We have that $\ell(R_{\pp p}/ J R_{\pp p}) + \ell
(R_{\pp p}/ a R_{\pp p}) = \ell(R_{\pp p}/ aJ R_{\pp p})$ so
(\ref{eq:length}) is equivalent to $aJR_{\pp p} = bR_{\pp p}$.

Thus as ideals in $R$, $aJ$ and $bR$ have the same primary decomposition up to codimension 
one. Namely, there exist prime ideals $\pp q_1, \dots, \pp q_s$ of height one and ideals $Q$, $Q'$ 
of height two such that
\begin{eqnarray*}
 aJ & = & \pp q_1 \cap \cdots \cap \pp q_s \cap Q \\
 bR & = &\pp q_1 \cap \cdots \cap \pp q_s \cap Q' .
\end{eqnarray*}
Note that $\dim (\pp q_1 \cap \cdots \cap \pp q_s / aJ ) \leq d-2$ and similarly for the ideal $bR$. 
Moreover since $a$ and $b$ are both nonzerodivisor, $aJ \cong J$ and $bR \cong R$. 
By Lemma~\ref{codim2}
\begin{eqnarray*} 
\hk J = \hk{aJ} & = & \hk{\pp q_1 \cap \cdots \cap \pp q_s } + \mathcal O(q^{d-2})  \\
 &  =  & \hk{bR } +\mathcal O(q^{d-2}) \\
  & =  & \hk{R} + \mathcal O(q^{d-2}). 
\end{eqnarray*}
\end{proof}

The following lemma shows that if a torsion
free module has zero cycle class, then its Hilbert-Kunz function is
compatible with that of the free module of the same rank.

\begin{lemma}\label{torfreezeromod}
 Let $R$ be as in Lemma~\ref{zeroclass}. Let $M$ be a finitely generated torsion free
  module of rank $r$.  Assume that $[M]_{d-1} =0$ in
  $\A_{d-1}(R)$. Then $\hk M = r \hk R + \mathcal O(q^{d-2}) $.
\end{lemma}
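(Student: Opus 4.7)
The plan is to proceed by induction on the rank $r$ of $M$.

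For the base case $r=1$: any finitely generated torsion-free $R$-module of rank $1$ over the domain $R$ embeds into the fraction field and, after clearing a nonzero denominator, is isomorphic to a nonzero ideal $J\subset R$. Since cycle classes are module invariants (Theorem~\ref{thmChan}(a)), $[J]_{d-1}=[M]_{d-1}=0$, and Lemma~\ref{zeroclass} gives $\hk{M}=\hk{J}=\hk{R}+\mathcal O(q^{d-2})$.

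For the inductive step with $r\ge 2$, the plan is to exhibit a Bourbaki-type short exact sequence
\[
0 \to R^{r-1} \to M \to I \to 0,
\]
where $I$ is an ideal of $R$. Granted this, Theorem~\ref{thmChan}(b) gives $[I]_{d-1}=[M]_{d-1}-[R^{r-1}]_{d-1}=0$, so Lemma~\ref{zeroclass} yields $\hk{I}=\hk{R}+\mathcal O(q^{d-2})$. Tensoring the sequence with $R/I_n$ produces the long exact sequence
\[
\tor_1^R(I, R/I_n) \to R^{r-1}/I_n R^{r-1} \to M/I_n M \to I/I_n I \to 0,
\]
whence $\hk{M}=(r-1)\hk{R}+\hk{I}-\ell(H)$, with $H$ the image of $\tor_1^R(I,R/I_n)$ in $R^{r-1}/I_n R^{r-1}$. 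To bound $\ell(H)=\mathcal O(q^{d-2})$, I observe that the proof of Lemma~\ref{zeroclass} exhibits nonzero $a,b\in R$ such that $aI$ and $bR$ agree in primary decomposition up to codimension two; applying the Tor long exact sequence to the codimension-two comparisons between $aI$, $bR$, and their common height-one primary envelope, together with $\tor_1^R(R,R/I_n)=0$ and the bound $\ell(\tor_i^R(T,R/I_n))=\mathcal O(q^{\dim T})$ from \cite[Lemma 1.1]{HMM}, gives $\ell(\tor_1^R(aI,R/I_n))=\mathcal O(q^{d-2})$. Since $aI\cong I$ as $R$-modules, the same bound holds for $I$. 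Combining, $\hk{M}=r\hk{R}+\mathcal O(q^{d-2})$.

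The main obstacle is the construction of the Bourbaki decomposition with a \emph{free} submodule of rank $r-1$, rather than merely a torsion-free one. Choosing $r-1$ elements of $M$ whose images in $M\otimes_R K$ are $K$-linearly independent yields a free submodule $F_0\cong R^{r-1}$, but the quotient $M/F_0$ may have torsion; replacing $F_0$ by its saturation in $M$ makes the quotient torsion free (hence isomorphic to an ideal), but the saturation itself need not remain free in a non-normal domain. The resolution of this gap must use the hypothesis $[M]_{d-1}=0$ essentially: its vanishing in $A_{d-1}(R)$ should force the discrepancy between $F_0^{\mathrm{sat}}$ and a genuine free submodule to be supported in codimension at least two, so that an appropriate modification of $F_0$ produces a free submodule whose quotient is an ideal, with all correction terms absorbed into $\mathcal O(q^{d-2})$ via Lemma~\ref{codim2}.
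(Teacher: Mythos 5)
Your proposal has a genuine gap, and it is exactly the one you flag at the end: the existence of a Bourbaki sequence $0 \to R^{r-1} \to M \to I \to 0$ with a \emph{free} submodule is a theorem about torsion-free modules over \emph{normal} (integrally closed) Noetherian domains; over the non-normal $R$ of Lemma~\ref{zeroclass} the saturation $F_0^{\mathrm{sat}}$ need not be free, and your suggested repair --- that $[M]_{d-1}=0$ should force the discrepancy into codimension two and allow ``an appropriate modification of $F_0$'' --- is a hope, not an argument. The hypothesis $[M]_{d-1}=0$ controls a cycle class, not the module-theoretic structure of $F_0^{\mathrm{sat}}$, and there is no evident way to extract a free rank-$(r-1)$ submodule with torsion-free quotient from it. The paper resolves precisely this obstruction by passing to the normalization: it sets $\overline M = \overline R m_1 + \cdots + \overline R m_s \subset \overline R^r$, notes that $\dim \overline M / M \le d-2$ (so Lemma~\ref{codim2} makes $M$ and $\overline M$ interchangeable up to $\mathcal O(q^{d-2})$), and performs the Bourbaki construction over the normal ring $\overline R$, obtaining $0 \to \overline R^{\,r-1} \to \overline M \to \pp a \to 0$ with $\pp a$ an ideal of $\overline R$ which, after clearing a denominator, becomes an ideal of $R$ with vanishing class.

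A secondary difference worth noting: you try to kill the $\tor_1$ term directly via the primary-decomposition comparison between $aI$ and $bR$ (this sketch can in fact be pushed through using the full strength of \cite[Lemma~1.1]{HMM} for higher Tor), but the paper avoids the issue entirely. From the tensored Bourbaki sequence it keeps only the inequality $\hk{M} \le r\hk{R} + \mathcal O(q^{d-2})$ (the unknown kernel has nonnegative length), and then obtains the reverse inequality by applying that same one-sided bound to the syzygy $L$ in a presentation $0 \to L \to R^{r+s} \to M \to 0$, where $[L]_{d-1}=0$ by Theorem~\ref{thmChan}(b). This two-sided squeeze is cleaner and also makes your induction on rank unnecessary, since the Bourbaki sequence reduces rank $r$ to rank $1$ in a single step. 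If you want to salvage your outline, import the paper's normalization step; as written, the inductive step does not go through.
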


\begin{proof} 
 That $M$ is torsion free of rank $r$ implies a sequence of
  inclusions for a fixed basis: $M \subset R^r \subset {\overline
    R}^r$. Let $m_1, \dots, m_s$ be a set of generators for $M$ and
  let $\overline M$ denote the submodule $\overline Rm_1 + \cdots +
  \overline R m_s$ of $ {\overline R} ^r$.  We observe that $\dim \overline M/M \leq
  d-2$ since $\overline{M}_{\pp p} = M_{\pp p}$ for any prime of
  dimension $d$ or $d-1$. In the normal ring $\overline R$, there exists an ideal $\pp a$
  that results the following short exact sequence

\begin{equation}\label{ses}
 0 \longrightarrow \overline R^{r-1} \longrightarrow \overline M \longrightarrow \pp a \longrightarrow 0. 
 \end{equation}
Indeed $\pp a$ is a Bourbaki ideal of $\overline M$ with respect to a free submodule of rank $r-1$. 
All modules in (\ref{ses}) are finitely generated $R$-modules and the sequence remains exact
as the homomorphisms are viewed over $R$. Tensoring (\ref{ses})  
by $R/I_n$ over $R$, one obtains an exact sequence
\[ 0 \longrightarrow L \longrightarrow (\overline R/I_n \overline R)^{r-1} 
\stackrel{\phi}{\longrightarrow} \overline M/ I_n \overline M
 \longrightarrow \pp a/I_n \pp a \longrightarrow 0 \]
where $L$ indicates the kernel of the homomorphism $\phi$. This shows 
\begin{equation}\label{ineq1}
 \hk{\pp a} - \hk{\overline M} + (r - 1) \hk{\overline R} = \ell( L)\geq 0 .
 \end{equation}

Recall that $\dim \overline R/R \leq d-2$ and $\dim \overline M/M \leq
d-2$. Thus applying Theorem~\ref{thmChan}(2), we have 
$[\overline R]_{d-1} =[R]_{d-1} =0$ and $[\overline M]_{d-1}
=[M]_{d-1} =0$ in $\A_{d-1}(R)$ which further implies $[\pp a]_{d-1} =
[\overline M]_{d-1} -(r-1) [\overline R]_{d-1} =0$. 
Initially $\pp a$ is an ideal of the integral domain
$\overline R$ so as an $R$-module, $\pp a$ is also finitely
generated torsion free of rank one. Also there exists an element $c$ in $R$
such that $c\pp a \subset R$ as an ideal and $\hk{c \pp a} =
\hk{\pp a}$ since $\pp a$ is isomorphic to $c\pp a$.  Moreover 
$[c \pp a]_{d-1} =[\pp a]_{d-1}  =0$. 
By (\ref{ineq1}), Lemmas~\ref{zeroclass} and \ref{codim2}, we have
\begin{equation*}
 \hk{\overline M}   \leq \hk{\pp a} + (r-1) \hk{\overline R} = r
\hk{R} + \mathcal O(q^{d-2})  
\end{equation*}
and
\begin{equation}\label{ineq2}
  \hk M   \leq r \hk R + \mathcal O(q^{d-2}) . 
\end{equation}

On the other hand there exists a short exact sequence $0 \rightarrow L
\rightarrow R^{r+s} \rightarrow M \rightarrow 0$ by resolution. The
module $L$ is torsion free of rank $s$ and $[L]_{d-1} =0$ again by
Theorem~\ref{thmChan}(b). We obtain another exact sequence by
tensoring with $R/I_n$:
\[ 0 \longrightarrow L' \longrightarrow L/I_n L \longrightarrow
(R/I_n)^{r+s} \longrightarrow M/I_nM \longrightarrow 0.  \] By
(\ref{ineq2}), $\hk L \leq s \hk R + \mathcal O (q^{d-2})$. A
similar computation as above shows
\[ \hk M \geq (r+s) \hk R - \hk L \geq  r  \hk R +\mathcal O(q^{d-2}) .\]
Hence $\hk M = r \hk R + \mathcal O(q^{d-2})$ and the proof is completed. 
\end{proof}

\begin{lemma}\label{torsionfree} 
Let $R$ be as in Lemma~\ref{zeroclass}.
Let $M$ be a finitely generated torsion free $R$-module. 

$(a)$ If $N$ is a finitely generated torsion free $R$-module such that
$[N] = [M] $ in $\A_d(R) \oplus \A_{d-1}(R)$, then $ \hk M = \hk N +
\mathcal O(q^{d-2}) $.

$(b)$ $\ell( \tor_1 ( M, R/I_n) ) = \mathcal O(q^{d-2})$. 
\end{lemma}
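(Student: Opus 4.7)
My plan is to prove part (a) by a Bourbaki-style reduction to Lemma~\ref{zeroclass}, following the strategy used in Lemma~\ref{torfreezeromod}, and then to deduce part (b) from (a) via a free presentation of $M$.

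For part (a): since $[M]_d = [N]_d \in \A_d(R) = \mathbb Z[R]$, both modules have the same rank $r$. I would replace $M$ and $N$ by their $\overline R$-saturations $\overline M := \overline R \cdot M$ and $\overline N := \overline R \cdot N$ inside the common $r$-dimensional $K$-vector space, where $K$ is the fraction field of $R$. Because $R$ is (R1$'$), $\overline R_{\pp p} = R_{\pp p}$ at every codimension-one prime of $R$; consequently the cokernels of $M \hookrightarrow \overline M$ and $N \hookrightarrow \overline N$ have dimension at most $d-2$, and Lemma~\ref{codim2} reduces the claim to $\hk{\overline M} = \hk{\overline N} + \mathcal O(q^{d-2})$. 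Over the normal semi-local ring $\overline R$ (after a harmless faithfully flat extension to make the residue fields infinite), Bourbaki's theorem provides short exact sequences
\[
0 \to \overline R^{r-1} \to \overline M \to \pp a_M \to 0, \qquad 0 \to \overline R^{r-1} \to \overline N \to \pp a_N \to 0,
\]
with $\pp a_M$ and $\pp a_N$ fractional ideals of $\overline R$. The hypothesis $[M]=[N]$, combined with the comparison of Chow groups at codimension $\leq 1$ as used in the proof of Theorem~\ref{thm.HK}, forces $[\pp a_M] = [\pp a_N]$ in the divisor class group of $\overline R$. Multiplying by a common denominator $c \in R \setminus \{0\}$ to make these ordinary ideals, and applying Lemma~\ref{zeroclass} over $\overline R$ to the quotient ideal representing the trivial class, yields $\hk{\pp a_M} = \hk{\pp a_N} + \mathcal O(q^{d-2})$. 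Propagating back through the Bourbaki sequences and accounting for the residue-field degree factors completes the proof of (a).

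For part (b): I choose a free presentation $0 \to L \to R^s \to M \to 0$; the module $L$ is torsion free of rank $s-r$. Tensoring with $R/I_n$ and using $\tor_1(R^s, R/I_n) = 0$ yields
\[
0 \to \tor_1(M, R/I_n) \to L/I_n L \to (R/I_n)^s \to M/I_n M \to 0,
\]
so $\ell(\tor_1(M, R/I_n)) = \hk{L} + \hk{M} - s\hk{R}$. By Theorem~\ref{thmChan}(b) applied to the presentation, $[L \oplus M] = [L] + [M] = [R^s]$ in $\A_d(R) \oplus \A_{d-1}(R)$. Both $L \oplus M$ and $R^s$ are torsion free of rank $s$ with the same cycle class, so part (a) gives $\hk{L \oplus M} = s\hk{R} + \mathcal O(q^{d-2})$; combined with the identity $\hk{L \oplus M} = \hk{L} + \hk{M}$, this yields $\ell(\tor_1(M, R/I_n)) = \mathcal O(q^{d-2})$ as desired.

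The main obstacle is the Bourbaki step in part (a): Bourbaki's theorem requires infinite residue fields, and for $R$ (or $\overline R$) with finite residue field this must be ensured by a faithfully flat base change such as $R \to R[t]_{\pp m R[t]}$, together with a verification that such a base change preserves the Hilbert-Kunz function modulo $\mathcal O(q^{d-2})$. A secondary subtlety is the careful translation of cycle classes between $R$ and the semi-local ring $\overline R$, and the bookkeeping of residue-field degree factors $g_i = [\kappa(\overline R_{\pp m_i}) : \kappa(R)]$ when assembling the Hilbert-Kunz function contributions from each local factor of $\overline R$.
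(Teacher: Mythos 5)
Your part (b) is exactly the paper's argument and is fine once (a) is in hand. The problem is part (a), where you rebuild the Bourbaki machinery instead of using Lemma~\ref{torfreezeromod} as a black box, and the rebuilt version has a genuine gap. When you tensor the Bourbaki sequence $0 \to \overline R^{\,r-1} \to \overline M \to \pp a_M \to 0$ with $R/I_n$, the long exact sequence yields only $\hk{\pp a_M} - \hk{\overline M} + (r-1)\hk{\overline R} = \ell(L_M) \ge 0$, where $L_M$ is the image of $\tor_1(\pp a_M, R/I_n)$; controlling $\ell(L_M)$ up to $\mathcal O(q^{d-2})$ is precisely part (b) for the ideal $\pp a_M$, which is not yet available at that point. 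So ``propagating back through the Bourbaki sequences'' gives one-sided inequalities for $\hk{\overline M}$ and $\hk{\overline N}$ separately, from which $\hk{\overline M} = \hk{\overline N} + \mathcal O(q^{d-2})$ does not follow. In Lemma~\ref{torfreezeromod} this is resolved by a two-sided squeeze (upper bound from Bourbaki, lower bound from a free presentation whose syzygy again has zero class); your relative version would need the analogous squeeze, which you do not supply and which is more delicate because the syzygy's class is $-[M]_{d-1}$ rather than $0$. A second, smaller imprecision: $[\pp a_M]=[\pp a_N]$ does not let you apply Lemma~\ref{zeroclass} as stated to a ``quotient ideal'' and conclude $\hk{\pp a_M} = \hk{\pp a_N}+\mathcal O(q^{d-2})$; that would require a multiplicativity of Hilbert-Kunz functions under ideal products which is not established. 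One must instead rerun the divisor computation from the proof of Lemma~\ref{zeroclass} to see that suitable multiples $a\pp a_M$ and $b\pp a_N$ share their height-one primary components and then invoke Lemma~\ref{codim2}.

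All of this effort is avoidable. The paper's proof of (a) is a short reduction to Lemma~\ref{torfreezeromod}: write the effective cycle $[M]_{d-1}=[N]_{d-1}=\sum_{i=1}^s [R/\pp p_i]$ coming from a prime filtration, observe $[\pp p_i]_{d-1} = -[R/\pp p_i]$ (from $0\to\pp p_i\to R\to R/\pp p_i\to 0$ and $[R]_{d-1}=0$), so that $M\oplus(\oplus_i\pp p_i)$ and $N\oplus(\oplus_i\pp p_i)$ are torsion free with vanishing class in $\A_{d-1}(R)$; Lemma~\ref{torfreezeromod} applied to both gives $\hk M + \sum_i\hk{\pp p_i} = (r+s)\hk R + \mathcal O(q^{d-2}) = \hk N + \sum_i \hk{\pp p_i} + \mathcal O(q^{d-2})$. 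No new Bourbaki sequence is needed, no base change to force infinite residue fields (which would in any case destroy the perfectness of the residue field required by the standing hypotheses), and no circular appeal to Tor bounds.
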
 

\begin{proof} 
  The assumption $[M] = [N]$ indicates that $M$ and $N$ have the same
  rank and $[M]_{d-1} = [N]_{d-1}$ in $A_{d-1}(R)$.  We write
  $[M]_{d-1} = \sum_{i=1}^s [R/ \pp p_i] $. Then the module $M
  \oplus (\oplus_i \pp p_i)$ is a torsion free module of rank $r+s$
  and determines a zero class in $\A_{d-1}(R)$ since $[\pp p_i]_{d-1}
  = - [R/\pp p_i]$. By Lemma~\ref{torfreezeromod} $M \oplus (\oplus_i
  \pp p_i)$ has the Hilbert-Kunz function in the form of $(r+s) \hk{R}
  + \mathcal O(q^{d-2})$ and similarly for $N$. Thus we have
\[ \hk M = (r+s) \hk R - \sum_{i=1}^s \hk{\pp p_i} +   
   \mathcal O(q^{d-2})  = \hk{N} \mathcal + O(q^{d-2}) .\]

To prove (b), we consider a short exact sequence $0 \rightarrow
G \rightarrow F \rightarrow M \rightarrow 0$ by resolution where $F$
is a free module. This induces the exact sequence
\[ 0 \longrightarrow \tor_1(M, R/I_n) \longrightarrow G/I_n G
\longrightarrow F/I_n F \longrightarrow M/I_nM \longrightarrow 0 . \]
Since $[F]_{d-1} = [ G \oplus M]_{d-1}$ and both modules are torsion
free, by the result of Part~(a), we conclude that
\[ \ell(\tor_1(M, R/I_n) ) = \hk G - \hk F + \hk M  = \mathcal O(q^{d-2}). \]

\end{proof}

Lemma~\ref{torsionfree} plays an important role in the discussion below
that leads to Theorem~\ref{thm:HKfunc}.  

To better analyze the Hilbert-Kunz functions for all finitely generated modules, the
following definition is given to a torsion free module $M$ of rank $r$: 
\[ \delta_n(M) = \hk M - r \hk R. \] The function $\delta_n(M)$ mainly
measures the difference between the Hilbert-Kunz function of $M$ and
that of a free module of the same rank. Notice that $\delta_n(M)$ is a
function of $\mathcal O(q^{d-1})$ and it is of $\mathcal O(q^{d-2})$
if $[M]_{d-1} =0$ by Lemma~\ref{torfreezeromod}. In particular
$\delta_n(R) =0$ and $\delta_n(M \oplus N) = \delta_n(M) + \delta_n(N)
$. If $[M]= [N]$ in $\A_d(R) \oplus \A_{d-1}(R)$, then $\delta_n(M) =
\delta_n(N) + \mathcal O(q^{d-2})$ by Lemma~\ref{torsionfree}~($a$).
In fact as already proved in (\ref{hkcomp2}), $\delta_n(M) = \tau
q^{d-1} + \mathcal O(q^{d-2})$ for some constant $\tau$ by taking
normalization.  Here we show that this can be achieved independently
within the current framework using the next Theorem~\ref{deltafcn}
which gives a recursive relation on $\delta_n(M)$ for a given $M$ via
the Frobenius map.

We recall the Frobenius map $f: R \rightarrow R$ on a ring $R$ of
characteristic $p$ assuming $R$ is complete with perfect residue class
field, for any $x$ in $R$, $f(x) = x^p$. For any $R$-module $M$, $^1M$
denotes the same additive group $M$ with an $R$-module structure via
$f$. Since $f$ is a finite map, $^1R$ is a torsion free $R$-module of
rank $p^d$.  If $M$ is a module of rank $r$ then $^1M$ has rank $p^d
r$ over $R$.  The map on the Chow group $f^*: \A_{d-1}(R) \rightarrow
\A_{d-1}(R)$ induced by the Frobenius map is multiplication by
$p^{d-1}$.  We observe $I_n \cdot _f M = (I^{[p^n]})^{[p]}M =
I_{n+1}M$. Therefore $\hk{^1M} = \varphi_{n+1}(M)$.

\begin{theorem}\label{deltafcn} 
  Let $R$ be an $F$-finite local domain regular in codimension one
  with perfect residue class field.  Let $M$ be a torsion free $R$-module
  of rank $r$. Then $\delta_{n+1}(M) = p^{d-1} \delta_n(M) + \mathcal
  O(q^{d-2})$.
\end{theorem}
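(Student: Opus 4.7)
The plan is to work through the identity $\hk{{}^1N} = \varphi_{n+1}(N)$ noted just before the theorem, applied to both $N = M$ and $N = R$, which rewrites
\[
\delta_{n+1}(M) \;=\; \hk{{}^1M} - r\,\hk{{}^1R}.
\]
Since $R$ is a domain and $M$ is torsion free, the construction ${}^1(-)$ preserves torsion-freeness (if $x^p m = 0$ with $x \neq 0$ then $m = 0$), so both ${}^1R$ and ${}^1M$ are torsion-free $R$-modules, of ranks $p^d$ and $p^d r$ respectively. The strategy is to match their Hilbert-Kunz functions against genuine direct sums of $R$ and $M$ via Lemma~\ref{torsionfree}(a), after one cycle-class computation in $\A_{d-1}(R)$.

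The key intermediate claim I need is
\[
[{}^1M]_{d-1} \;=\; r\,[{}^1R]_{d-1} + p^{d-1}\,[M]_{d-1} \quad \text{in } \A_{d-1}(R).
\]
To prove it, fix an embedding $R^r \hookrightarrow M$ with torsion cokernel $T$ of dimension at most $d-1$. Applying the exact functor ${}^1(-)$ and invoking additivity of cycle classes (Theorem~\ref{thmChan}(b)) yields $[{}^1M]_{d-1} = r\,[{}^1R]_{d-1} + [{}^1T]_{d-1}$, while the original sequence gives $[M]_{d-1} = [T]_{d-1}$. So it remains to show $[{}^1T]_{d-1} = p^{d-1}[T]_{d-1}$. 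By additivity on a prime filtration of $T$, this reduces to checking $[{}^1(R/\pp p)]_{d-1} = p^{d-1}[R/\pp p]$ for each prime $\pp p$ of dimension $d-1$, and after localizing at $\pp p$ the claim becomes the length computation
\[
\ell_{R_{\pp p}}({}^1\kappa(\pp p)) \;=\; [\kappa(\pp p):\kappa(\pp p)^p] \;=\; p^{d-1}.
\]

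With the displayed cycle-class formula in hand, compare the two torsion-free $R$-modules
\[
A \;=\; {}^1M \oplus R^{p^{d-1}r}, \qquad B \;=\; ({}^1R)^r \oplus M^{p^{d-1}},
\]
both of rank $p^d r + p^{d-1}r$. Using $[R]_{d-1} = 0$ (since $R$ is a domain), a direct check shows that $A$ and $B$ have the same $d$-cycle $(p^d + p^{d-1})r\,[R]$, and by the key formula they share the same $(d-1)$-cycle $r\,[{}^1R]_{d-1} + p^{d-1}[M]_{d-1}$. Lemma~\ref{torsionfree}(a) then gives $\hk A = \hk B + \mathcal O(q^{d-2})$, and splitting the Hilbert-Kunz function over the direct sums yields
\[
\hk{{}^1M} - r\,\hk{{}^1R} \;=\; p^{d-1}\bigl(\hk M - r\,\hk R\bigr) + \mathcal O(q^{d-2}),
\]
which is exactly $\delta_{n+1}(M) = p^{d-1}\delta_n(M) + \mathcal O(q^{d-2})$.

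The hard part is the cycle-class identity $[{}^1(R/\pp p)]_{d-1} = p^{d-1}[R/\pp p]$ for $\dim \pp p = d-1$: the reduction to the field degree $[\kappa(\pp p):\kappa(\pp p)^p]$ is formal, but evaluating this degree as $p^{d-1}$ is precisely where the standing hypotheses on $R$ (namely $F$-finite with perfect residue field) enter, via the standard fact that for an $F$-finite ring with perfect residue field one has $[\kappa(\pp p):\kappa(\pp p)^p] = p^{\dim R/\pp p}$. Everything else in the argument is formal manipulation of additivity in the Chow group together with Lemma~\ref{torsionfree}(a).
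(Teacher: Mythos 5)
Your argument is correct and follows essentially the same route as the paper: the same key cycle-class identity $[{}^1M]_{d-1}=p^{d-1}[M]_{d-1}+r[{}^1R]_{d-1}$, the same comparison of ${}^1M\oplus R^{p^{d-1}r}$ with $M^{p^{d-1}}\oplus({}^1R)^r$ via Lemma~\ref{torsionfree}(a), and the same bookkeeping with $\varphi_{n+1}$. The only difference is that you actually verify that $f^*$ acts on $\A_{d-1}(R)$ by multiplication by $p^{d-1}$ (reducing via a prime filtration to Kunz's formula $[\kappa(\pp p):\kappa(\pp p)^p]=p^{\dim R/\pp p}$ for $F$-finite rings with perfect residue field), whereas the paper simply asserts this fact in the paragraph preceding the theorem.
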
 

\begin{proof} 
First we claim that $[^1M]_{d-1} = p^{d-1} [M]_{d-1} +
r[^1R]_{d-1}$. Indeed there exists an embedding of a free module $F$
of rank $r$ into $M$ such that $M/F$ is a torsion module over $R$.
The sequence $0 \rightarrow {^1F}
\rightarrow {^1M} \rightarrow {^1(M/ F)} \rightarrow 0$ remains 
exact as $R$-modules  via $f$ by restriction.
We have  $[^1M]_{d-1} = [^1(M/F)]_{d-1} + r[^1R]_{d-1} $ in
$\A_{d-1}(R)$.  Notice that $^1F$ also has rank $p^dr$ so $^1(M/F)$ is
torsion, and $[^1(M/F)]_{d-1} = f^*([M/F]_{d-1})$  
since $\dim \, ^1(M/F) \leq d-1$.  
Furthermore $[M/F]_{d-1} = [M]_{d-1}$ by the above exact
sequence since $[R]_{d-1} =0$.  This shows $[^1(M/F)]_{d-1} = f^*([M]_{d-1}
)= p^{d-1}[M]_{d-1}$ and $[^1M]_{d-1} = p^{d-1} [M]_{d-1} +
r[^1R]_{d-1}$  as claimed.

Now we consider $^1M \oplus R^{p^{d-1}r}$ and $(M^{p^{d-1}}) \oplus
( {^1R})^r$. An extra copy of a free module is added to $^1M$
so that both modules have the same rank. These two modules define the same
cycle class in $\A_d(R) \oplus \A_{d-1}(R)$ by the above claim. Since
$M$ is torsion free, obviously both modules above are torsion
free and so their Hilbert-Kunz functions coincide up to $\mathcal O(
q^{d-2} ) $ by Lemma~\ref{torsionfree}($a$).  The expected result follows
straightforward computations and that $\hk{^1M} = \varphi_{n+1}(M)$:
\begin{eqnarray*}
 \hk{^1M} +  {p^{d-1}r} \hk R & = & {p^{d-1}}  \hk M + r \hk{^1R}  + \mathcal O(q^{d-2}) \\
 \varphi_{n+1}(M) - r \varphi_{n+1}(R)  & =  & {p^{d-1}}  \hk M -
 {p^{d-1}r} \hk R + \mathcal O(q^{d-2}).
\end{eqnarray*}
Hence $\delta_{n+1}(M) = p^{d-1} \delta_n(M) + \mathcal O(q^{d-2})$.
\end{proof}

\begin{lemma}\label{deltatau} 
Let $R$ be as in Theorem~\ref{deltafcn}. Assume $R$ has perfect residue
field and $\dim R = d$. 
Let $M$ be a torsion free module of rank $r$. Then there is a real constant
$\tau(M)$ such that $\delta_{n}(M) = \tau(M) q^{d-1}  + \mathcal O(q^{d-2})$.
\end{lemma}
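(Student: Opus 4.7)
The plan is to extract $\tau(M)$ as the limit of the sequence $a_n := \delta_n(M)/q^{d-1}$ (where $q = p^n$) by turning the recursion in Theorem~\ref{deltafcn} into a Cauchy estimate. Specifically, I would first rewrite Theorem~\ref{deltafcn} in the form $\delta_{n+1}(M) - p^{d-1}\delta_n(M) = E_n$ where $|E_n| \leq C \, p^{n(d-2)}$ for some constant $C$ independent of $n$ and all sufficiently large $n$.

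Next I would divide through by $p^{(d-1)(n+1)}$ to obtain
\[
a_{n+1} - a_n \;=\; \frac{E_n}{p^{(d-1)(n+1)}},
\]
so that $|a_{n+1} - a_n| \leq C \, p^{n(d-2) - (d-1)(n+1)} = C \, p^{-n-(d-1)}$. Thus $|a_{n+1} - a_n|$ is bounded by a geometric sequence with ratio $1/p < 1$, which shows that $\{a_n\}$ is Cauchy and converges to a real number; call this limit $\tau(M)$.

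To finish, I would estimate the tail. For each $n$,
\[
a_n - \tau(M) \;=\; -\sum_{k=n}^{\infty}(a_{k+1} - a_k),
\]
and the triangle inequality yields $|a_n - \tau(M)| \leq \sum_{k=n}^{\infty} C \, p^{-k-(d-1)} = C' p^{-n}$ for a suitable constant $C'$. Multiplying by $q^{d-1} = p^{n(d-1)}$ gives
\[
|\delta_n(M) - \tau(M) q^{d-1}| \;\leq\; C' \, p^{-n} \cdot p^{n(d-1)} \;=\; C' \, p^{n(d-2)} \;=\; C' \, q^{d-2},
\]
which is exactly the statement $\delta_n(M) = \tau(M)\, q^{d-1} + \mathcal{O}(q^{d-2})$.

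There is no real obstacle here beyond making the constants uniform; the only subtlety is to confirm that the implied constant in the $\mathcal{O}(q^{d-2})$ error term of Theorem~\ref{deltafcn} may be taken independent of $n$ (it may depend on $M$, $R$, $I$), which is exactly the content of the definition of the big-$\mathcal{O}$ notation introduced after Theorem~\ref{Monsky}. With that in hand, the geometric decay ratio $1/p$ makes the argument entirely routine.
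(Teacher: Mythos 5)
Your proof is correct and follows essentially the same route as the paper, which sets $v_n(M)=\delta_n(M)/q^{d-1}$, shows $v_{n+1}(M)-v_n(M)=\mathcal O(1/p^n)$ from Theorem~\ref{deltafcn}, and lets $\tau(M)$ be the limit. In fact your write-up is slightly more complete than the paper's sketch: you explicitly carry out the geometric tail estimate $|a_n-\tau(M)|\leq C'p^{-n}$ that converts mere convergence into the stated error bound $\delta_n(M)=\tau(M)q^{d-1}+\mathcal O(q^{d-2})$, a step the paper leaves implicit.
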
 

The proof of Lemma~\ref{deltatau} using Theorem~\ref{deltafcn} is very
similar to the original one \cite[Theorem~1.9]{HMM}. A quick sketch can
be done by setting $v_n(M) = \displaystyle{\frac{\delta_n(M) }{
    q^{d-1} }} $. Notice that $q=p^n$ varies as $n$ does. By careful
yet straightforward computations, one shows that $v_{n+1}(M) - v_n(M)
= \mathcal O(\frac{1}{p^n} ) $. Hence $v_n(M)$ converges to some
constant, denoted $\tau(M)$, as $n \longrightarrow \infty$. As indicated 
earlier in Section~\ref{error}, the value $\tau(M)$ is the same as $\tau([M]_{d-1})$ 
in Theorem~\ref{tau}. Discussions on the properties of $\tau(M)$ and 
its application can also be found in Section~\ref{error}.

We conclude this appendix by describing the first and second coefficients of
$\hk M$ for an arbitrary finitely generated module.  For an arbitrary
torsion free module $M$ of rank $r$, we have a comparison of its
Hilbert-Kunz function with that of a free module of the same rank
as an immediate corollary of Lemma~\ref{deltatau}:

\begin{equation}\label{torsionfree2eq}
\hk M = r \hk R + \tau(M) q^{d-1} + \mathcal O(q^{d-2}) .
\end{equation}

\begin{theorem}\label{thm:HKfunc} 
Let $R$ be as in Theorem~\ref{deltafcn}. 
Let $T$ be the torsion submodule of $M$. Then the coefficients 
$\alpha(M)$ and $\beta(M)$ in the Hilbert-Kunz function $\hk{M}$ of $M$ have 
the following properties:  \\
(a) $\alpha(M) = r \alpha(R)$;  \\
(b) $\beta(M) = r \beta(R) + \beta(T) + \tau(M/T)$. 
\end{theorem}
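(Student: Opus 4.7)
The plan is to reduce the statement to the formula (\ref{torsionfree2eq}) for the torsion free quotient $M/T$ by exploiting the short exact sequence
\[
0 \longrightarrow T \longrightarrow M \longrightarrow M/T \longrightarrow 0,
\]
in which $M/T$ is torsion free of rank $r$. First I would tensor this sequence with $R/I_n$ to obtain the four-term exact sequence
\[
\tor_1^R(M/T, R/I_n) \longrightarrow T/I_n T \longrightarrow M/I_n M \longrightarrow (M/T)/I_n(M/T) \longrightarrow 0,
\]
and take lengths to get $\hk{M} = \hk{T} + \hk{M/T} - \ell(K_n)$, where $K_n$ is the image of the $\tor_1$ term in $T/I_n T$. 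Since $M/T$ is torsion free, Lemma~\ref{torsionfree}(b) yields $\ell(\tor_1^R(M/T, R/I_n)) = \mathcal O(q^{d-2})$, hence $\ell(K_n) = \mathcal O(q^{d-2})$.

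Next I would substitute the known shapes of the two summands. Combining (\ref{torsionfree2eq}) with the expansion $\hk{R} = \alpha(R) q^d + \beta(R) q^{d-1} + \mathcal O(q^{d-2})$ gives
\[
\hk{M/T} = r\alpha(R) q^d + \bigl(r\beta(R) + \tau(M/T)\bigr) q^{d-1} + \mathcal O(q^{d-2}).
\]
Since $\dim T \leq d-1$, Monsky's theorem forces $\hk{T} = \mathcal O(q^{d-1})$; writing $\hk{T} = \beta(T) q^{d-1} + \mathcal O(q^{d-2})$ and adding the two expressions produces
\[
\hk{M} = r\alpha(R) q^d + \bigl(r\beta(R) + \beta(T) + \tau(M/T)\bigr) q^{d-1} + \mathcal O(q^{d-2}),
\]
which yields both (a) and (b).

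The main obstacle is justifying the second-order stable form for $\hk{T}$, i.e., the existence of a genuine coefficient $\beta(T)$ rather than a mere $\mathcal O(q^{d-1})$ bound. This is delivered by Theorem~\ref{thm.HK} applied to $T$ regarded as a module over the excellent (R1$'$) ring $R/\operatorname{ann}(T)$, whose dimension is at most $d-1$. Within the self-contained setting of the appendix one could instead proceed by induction on $d$: passing to the minimal primes of $\operatorname{ann}(T)$ and using Lemma~\ref{codim2} reduces the question to the torsion free case over lower-dimensional $F$-finite domains, where (\ref{torsionfree2eq}) applies directly.
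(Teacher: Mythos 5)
Your argument is exactly the paper's proof: the same short exact sequence $0 \to T \to M \to M/T \to 0$, the same use of Lemma~\ref{torsionfree}(b) to kill the $\tor_1$ term, and the same substitution of (\ref{torsionfree2eq}) for $M/T$ and Theorem~\ref{thm.HK} for $R$. One small correction to your closing paragraph: the existence of $\beta(T)$ is not an obstacle requiring Theorem~\ref{thm.HK} over $R/\operatorname{ann}(T)$ --- that quotient need not satisfy (R1$'$), so the theorem would not apply there anyway; instead, since $\dim T \le d-1$, Monsky's Theorem~\ref{Monsky}(b) already identifies the coefficient of $q^{d-1}$ in $\hk{T}$ as the Hilbert--Kunz multiplicity of $T$ when $\dim T = d-1$ and as zero otherwise, which is precisely the justification the paper uses.
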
 
\begin{proof} 
We set $M' = M/T$. The short exact sequence
$0 \rightarrow T \rightarrow M \rightarrow M' \rightarrow 0$
induces the  following long exact sequence 
\[ \cdots \longrightarrow \tor_1(M', R/I_n) \longrightarrow T/I_nT 
\longrightarrow M/I_nM \longrightarrow M'/I_nM' \longrightarrow 0 . \]
In the above $M'$ is a torsion free module. By
Lemma~\ref{torsionfree}($b$), we have $\hk M  = \hk T + \hk{M'} +
\mathcal O(q^{d-2})$.  Note that $M'$ is a torsion free module of rank $r$ and 
$\dim T \leq d-1$ since $T$ is torsion.  Thus using
Monsky's original work~\cite{Mon1} for $T$, (\ref{torsionfree2eq}) for $M'$ and
Theorem~\ref{thm.HK} for $R$, we have
\[ \begin{array}{ccl}
\hk M & = &  \hk T   + r \hk R + \tau(M') q^{d-1} + 
 \mathcal O(q^{d-2}) \\
& = & \beta(T) q^{d-1} + r \alpha (R) q^d + r \beta(R)   q^{d-1} + \tau(M') q^{d-1} +
\mathcal O(q^{d-2})  \\
& = & r \alpha(R) q^d + (r \beta(R) + \beta(T) + \tau(M') ) q^{d-1} + \mathcal O(q^{d-2}). 
\end{array} 
\]  
Hence the constants $\alpha(M)$ and $\beta(M)$ have the desired forms. 
\end{proof}


\bigskip

\medskip

{\small 
\noindent Department of Mathematics, Central Michigan University, Mt. Pleasant, MI~48859, U.S.A.

\noindent {\em email}: chan1cj@cmich.edu

\noindent Department of Mathematics, School of Science and Technology,
Meiji University, Higashimita 1-1-1, Tama-ku, Kawasaki 214-8571, Japan

\noindent {\em email}: kurano@isc.meiji.ac.jp
}

\end{document}